\theoremstyle{plain}
\newtheorem{theorem}{Theorem}[section]
\newtheorem{lemma}[theorem]{Lemma}
\newtheorem{corollary}[theorem]{Corollary}
\newtheorem{proposition}[theorem]{Proposition}
\theoremstyle{definition}
\numberwithin{equation}{section}
\newcommand{\tr}{\trace_{h_x}}
\newcommand{\tf}{\tracefree_{h_x}}
\DeclareMathOperator{\trace}{tr}
\DeclareMathOperator{\ric}{Rc}
\DeclareMathOperator{\scal}{R}
\DeclareMathOperator{\tracefree}{tf}
\DeclareMathOperator{\hess}{Hess}
\DeclareMathOperator{\divergence}{div}
\DeclareMathOperator{\LWT}{LWT}
\DeclareMathOperator{\LOT}{LOT}
\begin{document}

\title[ ]{Formal power series for asymptotically hyperbolic Bach-flat metrics}

\author{Aghil Alaee}
\address{\parbox{\linewidth}{Aghil Alaee\\
		Department of Mathematics and Computer Science, Clark University, Worcester, MA 01610, USA\\
		Center of Mathematical Sciences and Applications, Harvard University, Cambridge, MA 02138, USA}}
\email{aalaeekhangha@clarku.edu, aghil.alaee@cmsa.fas.harvard.edu}

\author{Eric Woolgar}
\address{\parbox{\linewidth}{Eric Woolgar\\
Department of Mathematical and Statistical Sciences, University of Alberta, Edmonton AB, Canada T6G 2G1}}
\email{ewoolgar@ualberta.ca}

\begin{abstract} It has been observed by Maldacena that one can extract asymptotically anti-de Sitter Einstein $4$-metrics from Bach-flat spacetimes by imposing simple principles and data choices. We cast this problem in a conformally compact Riemannian setting. Following an approach pioneered by Fefferman and Graham for the Einstein equation, we find formal power series for conformally compactifiable, asymptotically hyperbolic Bach-flat 4-metrics expanded about conformal infinity. We also consider Bach-flat metrics in the special case of constant scalar curvature and in the special case of constant $Q$-curvature. This allows us to determine the free data at conformal infinity, and to select those choices that lead to Einstein metrics. The asymptotically hyperbolic mass is part of that free data, in contrast to the pure Einstein case. Higher dimensional generalizations of the Bach tensor lack some of the geometrical meaning of the 4-dimensional case, but for a generalized Bach equation suited to the Fefferman-Graham technique, we are able to obtain a relatively complete result illustrating an interesting splitting of the free data into low-order ``Dirichlet'' and high-order ``Neumann'' pairs.
\end{abstract}

\maketitle
\section{Introduction}\label{section1}
\setcounter{equation}{0}

\noindent In seminal work, Fefferman and Graham \cite{FG1, FG2} studied formal series solutions of the Einstein equation for asymptotically hyperbolic metrics expanded about conformal infinity. This led to the identification of data for the singular boundary value problem for these metrics, the discovery of obstructions to power series solutions, and ultimately the discovery of new conformal invariants for the conformal boundary. It also laid the groundwork for holography within the AdS/CFT correspondence.

More recently, Gover and Waldron \cite{GW} and Graham \cite{Graham1} have performed similar analyses for a scalar geometric PDE problem, a singular boundary value problem for the Yamabe equation. In 3-dimensions, this problem was solved in \cite{ACF} as part of the construction of hyperboloidal initial data for the Einstein equations on spacetime. Albin \cite{Albin} has announced an analysis of asymptotically hyperbolic formal series solutions of the Euler-Lagrange equations of Lovelock actions in arbitrary dimensions.

Here we study the question of formal series expansions for a fourth-order geometric PDE in the asymptotically hyperbolic setting. We focus on the Bach equation in dimension $n=4$. The Bach equation is
\begin{equation}
\label{eq1.1}
0= B_{ac}:= \frac{1}{(n-3)}\nabla^b\nabla^d W_{abcd}+\frac{1}{(n-2)}W_{abcd}R^{bd}\ ,\\
\end{equation}
where $W_{abcd}$ is the Weyl tensor, $R_{ab}$ is the Ricci tensor, and $B_{ab}$ is called the Bach tensor. On closed $4$-manifolds, \eqref{eq1.1} is the Euler-Lagrange equation for the functional
\begin{equation}
\label{eq1.2}
\mathcal{W}(g)= \int_M \left \vert W_g\right \vert^2dV_g\ ,
\end{equation}
though for $n\ge 5$, critical points of this functional are all locally conformally flat (as the trace part of the Euler-Lagrange functional is $\frac12(n-4)|W|^2$ and vanishes iff $W=0$) and therefore satisfy \eqref{eq1.1} somewhat trivially. There are inequivalent ways to extend the Bach tensor, originally defined only for $n=4$, to higher dimensions (see sections 1.3 and 5, and reference \cite{Bergman}).

We seek solutions of \eqref{eq1.1} with a pole at infinity of order 2, expressible as
\begin{equation}
\label{eq1.3}
g=\frac{1}{x^2}\left ( dx^2 \oplus h_x \right )\ ,
\end{equation}
on a complete $4$-manifold $(M,g)$, or at least on a collar neighbourhood $x< \epsilon$, where $h_x$ extends differentiably to $x=0$ and induces a Riemannian metric on each constant-$x$ hypersurface. Metrics obeying \eqref{eq1.3} are called \emph{conformally compactifiable and asymptotically hyperbolic}.
Metrics expressed in the form of \eqref{eq1.3} are said to be in \emph{(Graham-Lee) normal form}.
For small $x$, an open region in $(M,dx^2\oplus h_x)$ may be isometrically embedded as an open, bounded region in a product manifold in which the locus $x=0$ becomes a boundary. This locus, equipped with the conformal class $\left [h_0 \right ] $, is called \emph{conformal infinity}, for which $x$ is called a \emph{special defining function} (see Section 2 for more terminology).\footnote
{Throughout we use coordinates $x^a=(x,y^i)$ on a collar neighbourhood of conformal infinity, where $x$ is a special defining function for conformal infinity and the $y^i$ are coordinates in the level sets $x=const$ orthogonal to $\partial_x$. Greek indices are generally reserved for other purposes.}
When such a metric is Einstein, it is called \emph{Poincar\'e-Einstein}. We will use the term \emph{Poincar\'e-Bach} for Bach-flat metrics of the form \eqref{eq1.3}. As with the Poincar\'e-Einstein case \cite{FG1, FG2}, we will pursue here the goal of finding formal power series for $h_x$ for Poincar\'e-Bach metrics. We do not consider convergence, not even on a collar of $x=0$. The question of asymptotics, and in particular the existence of smooth conformal compactifications, for asymptotically hyperbolic solutions of the $n=4$ Bach equation on a neighbourhood of conformal infinity was taken up by Anderson in \cite{Anderson2, Anderson3}.

Define
\begin{equation}
\label{eq1.4}
\begin{split}
E_g:=&\, \ric_g+(n-1)g\ ,\\
A_g:=&\, \trace_g E_g = \scal_g+n(n-1)\ ,
\end{split}
\end{equation}
(we sometimes omit the subscript $g$) and recall that a conformally compactifiable and asymptotically hyperbolic metric has $E_g={\mathcal O}(x)$. If $E_g={\mathcal O}(x^2)$, a calculation shows that
\begin{equation}
\label{eq1.5}
h_x'(0)=0\ ,
\end{equation}
where we denote differentiation with respect to $x$ by a prime. We recall (following terminology in \cite{DGH}) that a conformally compactifiable metric is \emph{asymptotically hyperbolic Einstein to order $k$} if $E_g\in {\mathcal O}(x^k)$ for $x$ any special defining function; this is also called \emph{asymptotically Poincar\'e-Einstein (APE) to order $k$}. Any metric that is APE to order $2k<n-1$ is \emph{partially even to order $2k$}, by which we mean that the odd-order derivatives $h^{(2j-1)}(0)$ vanish for $j\le k$.

\subsection{Four bulk dimensions} A major motivation for the present paper is the assertion of Maldacena \cite{Maldacena} that in $n=4$ bulk dimensions one can replace Einstein gravity by classical conformal gravity, the variational theory of the action functional \eqref{eq1.2} with suitable asymptotically anti-de Sitter or asymptotically hyperbolic fall-off conditions and other conditions. Maldacena's proposal is that the condition $h_x'(0)=0$, together with certain physical considerations, selects only those critical points of this action which are Einstein. For another approach, based on Anderson's formula \cite{Anderson1} for renormalized volume but ultimately invoking other considerations as well, see \cite{AO}.

In the spirit of holography, we instead search for well-defined asymptotic conditions which alone can select Einstein metrics, at least when considering Riemannian signature metrics. This brings us to our main result.

\begin{theorem}\label{theorem1.1} Let $h_0$ be a Riemannian metric on $\Sigma^3$ and let $\Phi$, $\Psi$ be smooth symmetric $h_0$-tracefree $(0,2)$-tensors on $\Sigma$ such that $\divergence_{h_0}\Psi=0$. Let $T_i$ denote smooth functions on $\Sigma$ for $i\ge 2$. For any such data $h_0$, $\Phi$, $\Psi$, $T_i$ with $i\ge 2$, the equation $B_g=0$ admits a unique normal form solution \eqref{eq1.3} on $(M^4,g)$ with $h_x\equiv h(x)$ given by a formal power series in $x$, such that $(\Sigma,[h_0])=\partial_{\infty}M$ is the conformal infinity, with $h(0)=h_0$, $h'(0)=0$, $\tracefree_{h_0} h''(0)=\Phi$, $\tracefree_{h_0} h'''(0)=\Psi$, and $\trace_{h_0}h^{(i)}(0)=T_i$.
\end{theorem}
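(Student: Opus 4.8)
The plan is to follow the Fefferman--Graham strategy of plugging the normal-form ansatz \eqref{eq1.3} into the Bach equation $B_g=0$ and solving order-by-order for the Taylor coefficients $h^{(i)}(0)$ of $h_x$. First I would compute the curvature quantities (Ricci, scalar, Weyl) of the warped-product-type metric $g=x^{-2}(dx^2\oplus h_x)$ in terms of $h_x$ and its $x$-derivatives, taking advantage of the conformal factor $x^{-2}$ and of the product structure in the $(dx^2\oplus h_x)$ frame. Because the Bach tensor is a \emph{fourth-order} operator in the metric, the fourth-order radial derivative $h^{(4)}$ will enter the various components $B_{xx}$, $B_{xi}$, $B_{ij}$, and the central task is to organize these components so as to read off a clean recursion: at each order $x^{k}$ one equation should determine one new Taylor coefficient in terms of lower-order ones.

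Next I would decompose each tensorial component of the Bach equation into its $h_0$-trace and $h_0$-tracefree parts, since the statement of Theorem~\ref{theorem1.1} prescribes the data in exactly this split form (the free tracefree tensors $\Phi=\tracefree_{h_0}h''(0)$, $\Psi=\tracefree_{h_0}h'''(0)$ with $\divergence_{h_0}\Psi=0$, versus the freely prescribable traces $T_i=\trace_{h_0}h^{(i)}(0)$). The low-order coefficients are pinned down first: $h(0)=h_0$ is the chosen boundary metric, and $h'(0)=0$ follows from the asymptotically hyperbolic condition together with \eqref{eq1.5}, which I may assume. I would then show that the order-$x^0$ and order-$x^1$ pieces of $B_g=0$ are satisfied identically (or impose only the stated constraints, e.g. $\divergence_{h_0}\Psi=0$ appearing as the tracefree--transverse part of some low component), so that $\Phi$ and $\Psi$ survive as genuine free data rather than being forced to vanish. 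The appearance of $\divergence_{h_0}\Psi=0$ is the signal of a Bianchi-type or transverse constraint, and I would verify it emerges precisely from the divergence structure $\nabla^b B_{ab}$ inherited from the contracted second Bianchi identity.

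With the low-order analysis in hand, the inductive step is the heart of the argument. Suppose $h(0),\dots,h^{(i-1)}(0)$ are determined and consistent; I would examine the coefficient of the appropriate power of $x$ in $B_{ij}=0$ and show that its $h_0$-tracefree part determines $\tracefree_{h_0}h^{(i)}(0)$ algebraically (as a linear expression with a nonvanishing scalar coefficient, so that the inductive solve is uniquely possible), while its trace part is \emph{not} determined by the equation and is therefore fixed by hand as the prescribed $T_i$. Simultaneously the $B_{xi}$ and $B_{xx}$ components must be checked to be consistent --- either automatically satisfied modulo lower-order data via the Bianchi identities, or supplying the transversality constraint on the free tensor at the relevant order. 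The key structural fact I expect to exploit is that the linearization of the fourth-order operator at a given order acts on $h^{(i)}(0)$ through a factor that is a polynomial in $i$ and $n$; one must check this indicial polynomial has no spurious zeros for $i\ge 2$ in dimension $n=4$, so that no obstruction (nonuniqueness or forced vanishing) arises at any order.

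The main obstacle I anticipate is exactly this indicial/degeneracy analysis together with the bookkeeping of which combinations of derivatives the fourth-order operator couples at each order. Unlike the second-order Einstein case, where $h''(0)$ is essentially fixed and a single logarithmic obstruction appears at the critical order, here the fourth-order nature means two adjacent radial derivatives ($h''$ and $h'''$, i.e. $\Phi$ and $\Psi$) are free, and one must verify carefully that the recursion never forces an additional constraint nor leaves an additional coefficient undetermined beyond the prescribed trace $T_i$. Managing the tracefree-versus-trace splitting consistently through the fourth-order curvature computation --- and confirming that the transverse condition $\divergence_{h_0}\Psi=0$ is the only constraint on the free data --- is where the real work lies; the remainder is a formal power-series induction that terminates at no finite order and so produces the full formal solution uniquely.
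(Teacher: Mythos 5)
Your plan is essentially the paper's proof: one expands $B_{ij}$ in normal form to find the leading term $-\tfrac14 x^2\,\tracefree_{h_0}h^{(4)}_{ij}(x)$ plus lower-weight terms, so that differentiating $r$ times and setting $x=0$ gives an indicial factor $\tfrac14 r(r-1)$ on $\tracefree_{h_0}h^{(r+2)}(0)$ --- vanishing exactly at $r=0,1$ (which is precisely why $\Phi$ and $\Psi$ are free data, rather than a degeneracy to be excluded for all $i\ge 2$ as your phrasing suggests) and nonzero for $r\ge 2$, while the trace part never appears at leading order and so must be supplied as the $T_i$; the constraint $\divergence_{h_0}\Psi=0$ arises from the one coefficient $B_{0i}''(0)$ that the divergence (Bianchi) identity leaves undetermined, and $B_{00}=0$ follows from tracelessness of $B$ when $n=4$. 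The only slip is your remark that $h'(0)=0$ \emph{follows} from asymptotic hyperbolicity via \eqref{eq1.5}: that implication requires $E_g={\mathcal O}(x^2)$, which is not automatic for Bach-flat metrics, so $h'(0)=0$ is genuinely imposed as part of the data here.
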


Here $\tracefree_{h_0}$ and $\trace_{h_0}$ denote the tracefree and trace parts of a $(0,2)$-tensor (converted to an endomorphism using $h_0$), respectively.
The data encoded in $T_i$ are an artifact of conformal freedom (they can be made to vanish by choosing an appropriate conformal representative; see the next subsection).

In \cite{Anderson3}, the problem of boundary data for Bach-flat metrics in 4-dimensions is studied in harmonic gauge rather than in the Graham-Lee normal form gauge of equation \eqref{eq1.3}, with analogous results to ours.

In \cite{GILM}, the variations of the on-shell action \eqref{eq1.2} (i.e., the value of the action at a Bach-flat metric) in $4$-dimensions due to variations in $h_0:= h(0)$ and $h'(0)$ are computed. The variation with respect to $h_0$ is complicated but reduces when $h'(0)=0$ to (a constant times) the third-order (in powers of $x$) piece of the electric components $W(\cdot,\partial_x,\cdot,\partial_x)$ of the Weyl tensor evaluated at conformal infinity \cite[Equation (29)]{GILM}. A simple calculation then yields that this variation is just proportional to $\Psi$. The variation with respect to $h'(0)$ is (a constant times) the second-order piece of the electric Weyl tensor at infinity \cite[Equation (24)]{GILM},
which we compute to be $-\frac12(\Phi+\tracefree_{h_0} \ric_{h_0})$, or simply $-\frac12 \Phi$ when $h_0$ is Einstein.

\subsection{Choosing the conformal representative and the mass aspect}
In view of \cite{Maldacena}, one can try to find the subset of formal power series for Bach-flat metrics which are formal power series for Poincar\'e-Einstein $4$-metrics. Such metrics have $h'(0)=0$ and $h''(0)=-2P_{h_0}$, where $P_{h_0}$ denotes the Schouten tensor of $h_0$. However, the $4$-dimensional Bach tensor is conformally invariant. Its vanishing is an integrability condition for \emph{conformally} Einstein metrics. To choose Einstein representatives within conformal classes of metrics, one must impose a further condition that will fix the trace data in Theorem \ref{theorem1.1}. Now, Einstein metrics obviously have constant scalar curvature $\scal_g=-12$ and constant Branson $Q$-curvature $Q_g=6$ where
\begin{equation}
\label{eq1.6}
Q_g:= \frac16 \left [ -\Delta_g \scal_g + \scal_g^2-3|\ric_g |_g^2\right ]\ .
\end{equation}
One can impose one of these conditions (constant $A_g$ or constant $Q_g$) in order to fix the infinitely many trace data $T_i$ (except, it turns out, $T_4$) in Theorem \ref{theorem1.1}, leaving finitely many data to be chosen by imposing conditions at infinity.

To see that the condition $A=0$ fixes a unique representative metric $g$ within its conformal class of Bach-flat metrics, consider that if ${\tilde g}:=u^2g$ and $g$ both have scalar curvature $-n(n-1)$, then $u$ must be a positive solution of the Yamabe equation $-\frac{4}{n(n-2)}\Delta_g u + \left ( u^{\frac{4}{(n-2)}}-1\right )u=0$ and $u\to 1$ at conformal infinity. But then $u\equiv 1$ by the maximum principle. (We assume here completeness with no ``inner'' boundary---if one is present, there may sometimes be other solutions for $u$.) Since the condition $Q=6$ fixes the same free data, it also fixes a unique representative metric $g$ within its conformal class of Bach-flat metrics.

It turns out that neither fixing $A_g$ (and thus $\scal_g$) nor fixing $Q_g$ will determine $T_4$. Consider the quantity \cite{Wang, CH, Woolgar}
\begin{equation}
\label{eq1.7}
\mu:=\frac{1}{3!}\trace_{h_0} h^{(4)}(0)-\left \vert \frac{1}{2!}h''(0)\right \vert_{h_0}^2=\frac{1}{3!}T_4-\left \vert \frac{1}{2!}h''(0)\right \vert_{h_0}^2\ .
\end{equation}
When conformal infinity carries a round sphere metric, this quantity is called the \emph{mass aspect function}. In that case, if $g$ is Poincar\'e-Einstein the mass (the integral of $\mu$ over conformal infinity) must vanish \cite{AD}, and indeed so must the mass aspect (e.g., \cite[see the proof of Conjecture 2.7]{Woolgar}). More generally, to select Poincar\'e-Einstein metrics, we must choose the correct conformal class, and this was not completely achieved by choosing data as in Theorem \ref{theorem1.1}. We must in addition impose the condition $T_4=\frac32 |h''(0)|_{h_0}^2$ so that $\mu=0$.\footnote
{There is debate over whether complete metrics can have vanishing mass but nontrivial mass aspect when $n=4$ and $A\ge 0$ (see \cite{CGNP} for further details).}
The following results give two methods for fixing the conformal class.

\begin{corollary}\label{corollary1.2}
Let $\Psi$ be a symmetric $(0,2)$-tensor on conformal infinity with $\trace_{h_0}\Psi=0$, $\divergence_{h_0}\Psi=0$. A formal power series for an asymptotically hyperbolic $4$-metric in normal form with $h(0)=h_0$, $h'(0)=0$, $h''(0)=-2P_{h_0}$, $h'''(0)=\Psi$, and $\frac{1}{3!}\trace_{h_0} \left ( h^{(4)}(0)\right ) =\left \vert P_{h_0}\right \vert_{h_0}^2$ is a formal solution of the system $B_g=0$, $A_g=0$ if and only if it is a formal solution of the Einstein equations.
\end{corollary}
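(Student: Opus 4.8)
The plan is to deduce both implications from the uniqueness assertion of Theorem~\ref{theorem1.1}, after observing that the prescribed data are exactly the Fefferman--Graham data of a Poincar\'e--Einstein metric. The implication ``Einstein $\Rightarrow$ $B_g=0,\ A_g=0$'' is immediate: a formal Einstein solution has $\ric_g=-3g$, hence $\scal_g=-12$ and $A_g=0$, and every Einstein metric is Bach-flat (the Weyl tensor is trace-free, so $W_{abcd}R^{bd}=0$, while $\nabla^d W_{abcd}$ vanishes because the Schouten tensor is parallel), so $B_g=0$. Thus a series with the prescribed data that satisfies the Einstein equations automatically satisfies $\{B_g=0,\ A_g=0\}$, with no existence statement required.

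For the converse I would first identify the prescribed data with Poincar\'e--Einstein data. For a Poincar\'e--Einstein $4$-metric the order-two Fefferman--Graham coefficient is $-P_{h_0}$, which gives $h''(0)=-2P_{h_0}$; the order-three coefficient is a free trace-free, divergence-free tensor, here $\tfrac16\Psi$; and the mass aspect $\mu$ of \eqref{eq1.7} vanishes, which is precisely the prescribed condition $\tfrac{1}{3!}\trace_{h_0}h^{(4)}(0)=|P_{h_0}|^2_{h_0}$. Since conformal infinity is $3$-dimensional there is no Fefferman--Graham obstruction, so the formal Poincar\'e--Einstein expansion with boundary metric $h_0$ and free datum $\tfrac16\Psi$ exists and is unique \cite{FG1,FG2}; by the previous paragraph it is a formal solution of $\{B_g=0,\ A_g=0\}$ carrying exactly the prescribed data.

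Now suppose a series with the prescribed data solves $\{B_g=0,\ A_g=0\}$. By Theorem~\ref{theorem1.1} the Bach-flat condition determines it from $h_0$, $\Phi=\tracefree_{h_0}h''(0)=-2\tracefree_{h_0}P_{h_0}$, $\Psi$, and the traces $T_i=\trace_{h_0}h^{(i)}(0)$, $i\ge2$. Imposing $A_g=0$ fixes every $T_i$ with $i\ne4$ in terms of $(h_0,\Phi,\Psi)$, as in the analysis preceding the corollary, and the values it forces for $T_2$ and $T_3$ agree with the prescribed $T_2=-2\trace_{h_0}P_{h_0}$ and $T_3=0$ because the Poincar\'e--Einstein expansion above already realizes them. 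The remaining datum $T_4$ is pinned by the prescribed mass-aspect condition. Hence \emph{every} free datum of Theorem~\ref{theorem1.1} is now fixed, so the formal solution of $\{B_g=0,\ A_g=0\}$ with the prescribed data is unique; as the Poincar\'e--Einstein expansion is such a solution, the two must coincide, and the given series is therefore Einstein.

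The step I expect to be the main obstacle is the claim, used above, that $A_g=0$ determines $T_i$ for all $i\ne4$ while leaving $T_4$ free. This rests on showing that the order-$i$ part of the $A_g=0$ equation can be solved for $T_i$ with a nonvanishing indicial coefficient for every $i\ge2$ except the critical order $i=4$, where the coefficient degenerates and $T_4$ decouples. Once this trace analysis is in hand, the corollary follows purely from the uniqueness in Theorem~\ref{theorem1.1} combined with the identification of the prescribed data as Poincar\'e--Einstein data.
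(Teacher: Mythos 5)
Your proposal is correct and follows essentially the same route as the paper: establish uniqueness of the formal solution of $\{B_g=0,\ A_g=0\}$ with the prescribed data (tracefree parts via Theorem \ref{theorem1.1}/Corollary \ref{corollary4.3}, traces via the $A_g=0$ recursion, which is exactly the paper's Proposition \ref{proposition4.5}, including the degenerate indicial coefficient at order $4$), and then match against the unique Fefferman--Graham Poincar\'e--Einstein expansion realizing the same data. The step you flag as the main obstacle is precisely what the paper supplies in Section 4.3, including the consistency check $F_4=0$ at the critical order.
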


\begin{corollary}\label{corollary1.3}
Let $\Psi$ be a symmetric $(0,2)$-tensor on conformal infinity with $\trace_{h_0}\Psi=0$, $\divergence_{h_0}\Psi=0$. A formal power series for an asymptotically hyperbolic $4$-metric in normal form with $h(0)=h_0$, $h'(0)=0$, $h''(0)=-2P_{h_0}$, $h'''(0)=\Psi$, and $\frac{1}{3!}\trace_{h_0} \left ( h^{(4)}(0)\right ) =\left \vert P_{h_0}\right \vert_{h_0}^2$ is a formal solution of the system $B_g=0$, $Q_g=6$ if and only if it is a formal solution of the Einstein equations.
\end{corollary}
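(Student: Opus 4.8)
The plan is to prove both implications, the substance lying in the ``only if'' direction, which I would settle by showing that $Q_g=6$ pins down the trace data $T_i$ to their Poincar\'e--Einstein values for every $i\ne4$ and then invoking the uniqueness in Theorem \ref{theorem1.1}; this parallels the role played by $A_g=0$ in Corollary \ref{corollary1.2}. The ``if'' direction is immediate: a formal Einstein solution has $\scal_g=-12$ and $|\ric_g|_g^2=36$, so $\Delta_g\scal_g=0$ and \eqref{eq1.6} gives $Q_g=\tfrac16(144-108)=6$, while an Einstein $4$-metric is automatically Bach-flat. Moreover the Fefferman--Graham expansion of a Poincar\'e--Einstein metric in normal form has precisely $h'(0)=0$, $h''(0)=-2P_{h_0}$, free data $h'''(0)=\Psi$ with $\trace_{h_0}\Psi=\divergence_{h_0}\Psi=0$, and vanishing mass aspect $\mu=0$, i.e.\ $\tfrac{1}{3!}\trace_{h_0}h^{(4)}(0)=|P_{h_0}|_{h_0}^2$, so the prescribed data is exactly the Poincar\'e--Einstein data.

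For the converse the key step is to re-express the scalar condition. Writing $\scal_g=-12+A$ and $E_g=\ric_g+3g$ with $A=\trace_g E_g$, and using $|E_g|_g^2=|\mathring E|_g^2+\tfrac14 A^2$ in dimension four (where $\mathring E$ is the tracefree part of $E_g$, i.e.\ the tracefree Ricci tensor), substitution into \eqref{eq1.6} yields the pointwise identity
\[
6(Q_g-6)=-\Delta_g A-6A+\tfrac14 A^2-3\,|\mathring E|_g^2 .
\]
I would then expand this in powers of $x$. From the normal-form expression $A=6x\theta+x^2\bigl(\scal_{h_x}-2\theta'-\theta^2-\tfrac14\trace(H^2)\bigr)$, with $H=h_x^{-1}h_x'$ and $\theta=\tfrac12\trace H$, one computes that the coefficient of the top trace datum $T_k=\trace_{h_0}h^{(k)}(0)$ in the $x^k$-coefficient of $A$ equals $\tfrac{4-k}{(k-1)!}$. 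Since $\Delta_g$ acts on a radial $O(x^k)$ term with leading factor $k(k-3)$ (the case $n=4$ of $k(k+1-n)$), while $\tfrac14 A^2$ and $|\mathring E|_g^2$ are quadratic and of strictly lower order in $x$, the coefficient of $T_k$ in the $x^k$-coefficient of $6(Q_g-6)$ is $-(k^2-3k+6)\tfrac{4-k}{(k-1)!}$, which vanishes if and only if $k=4$.

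Consequently the system $\{B_g=0,\ Q_g=6\}$, read order by order, is a triangular recursion: the tracefree part of $B_g=0$ fixes $\tracefree_{h_0}h^{(k)}(0)$ for $k\ge4$ from lower-order data (Theorem \ref{theorem1.1}), while $Q_g=6$ determines $T_k$ for every $k\ne4$ and, at $k=4$, imposes only a relation among the lower-order data, leaving $T_4$ free. For the given solution the prescribed data through order $3$ ($h_0$, $h''(0)=-2P_{h_0}$, $T_3=0$, $\Psi$) and the Bach-determined $\tracefree_{h_0}h^{(4)}(0)$ all coincide with the corresponding Poincar\'e--Einstein quantities; since the Poincar\'e--Einstein series itself solves $\{B_g=0,\ Q_g=6\}$, the $k=4$ relation holds and the recursion forces $T_k$ to equal its Poincar\'e--Einstein value for every $k\ne4$, while the hypothesis $\tfrac{1}{3!}\trace_{h_0}h^{(4)}(0)=|P_{h_0}|_{h_0}^2$ fixes $T_4$ to its (vanishing-mass-aspect) Einstein value. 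Thus all the free data $h_0,\Phi,\Psi,\{T_i\}$ of the given solution agree with those of the Poincar\'e--Einstein series, and uniqueness in Theorem \ref{theorem1.1} identifies the two; hence the given series is a formal Einstein solution.

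The main obstacle I anticipate is making the triangular structure fully rigorous: one must check that at each order the only genuinely new unknowns are $\tracefree_{h_0}h^{(k)}(0)$ and $T_k$, that the nonlinear and lower-derivative terms suppressed in the indicial computation really involve only data of order $<k$, and---most delicately---that the order-$4$ relation produced by the resonance is identically satisfied by the prescribed data rather than constituting a further obstruction. It is exactly here that the choices $h''(0)=-2P_{h_0}$ and $\mu=0$ are used, and this is what makes $Q_g=6$ (like $A_g=0$ in Corollary \ref{corollary1.2}) compatible with, rather than over-determining, the Einstein series.
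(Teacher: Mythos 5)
Your proposal is correct and follows essentially the same route as the paper: the ``only if'' direction rests on showing that $Q_g=6$ determines all trace data $T_k$ except $T_4$ (your indicial coefficient $-(k^2-3k+6)\tfrac{4-k}{(k-1)!}$ is, after multiplying by $k!/6$, exactly the factor $\tfrac16 k(k-4)(k^2-3k+6)$ in the paper's recursion \eqref{eq4.18} of Proposition \ref{proposition4.6}), the order-$4$ consistency relation is discharged by comparison with the Poincar\'e--Einstein/APE series just as in the paper, and the conclusion follows from the uniqueness in Theorem \ref{theorem1.1} exactly as in the proof of Corollary \ref{corollary1.2}.
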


If one does not fix $\frac{1}{3!}\trace_{h_0} \left ( h^{(4)}(0)\right ) =\left \vert P_{h_0}\right \vert_{h_0}^2$ (i.e., $\mu=0$) but one does fix all the other data as in Corollary \ref{corollary1.2} or \ref{corollary1.3}, one obtains for each choice of mass aspect function $\mu$ a formal power series for an asymptotically Poincar\'e-Bach metric. Such a series represents a conformally Poincar\'e-Einstein metric of arbitrary mass.

In \cite{Anderson4}, the Bach-flat condition is studied for constant-scalar-curvature asymptotically de Sitter spacetimes, but using harmonic gauge. The series expansion is in terms of a timelike defining function, and focuses on extracting Einstein metrics, so the expansion is assumed to agree with the usual Fefferman-Graham expansion up to the order of the free data (i.e, to $h^{(2)}(0)$ inclusive, for $n=4$). Modulo these differences, the results are commensurate with ours.

\subsection{Higher bulk dimensions} An intriguing property of Poincar\'e-Einstein metrics is that the free data do not all appear at low orders in the Fefferman-Graham expansion. Instead, in addition to the free data at order zero (the boundary conformal metric), the other free data occur at order $n-1$. This is key to the AdS/CFT correspondence. In the Bach case, it turns out that a similar split between low- and high-order data occurs, but it is not manifest when $n=4$ because of the low dimension.

To explore the phenomenon, we consider a generalization of the Bach tensor to higher dimensions. Of course, the Bach tensor is most naturally defined in $4$ dimensions, where it has vanishing divergence and trace, is a local conformal invariant, and obstructs conformally Einstein metrics, while nontrivially generalizing the Einstein condition (i.e., there are Bach-flat metrics that are not Einstein, but Einstein metrics are Bach-flat). There are many inequivalent generalizations of the Bach tensor for $n>4$ \cite{Bergman}, each preserving some desirable properties of the $4$-dimensional Bach tensor but none preserving them all.\footnote
{It is possible to preserve the desirable properties of the Bach tensor in higher (even bulk) dimensions, at the expense of working with a tensor of higher differential order, specifically the \emph{ambient obstruction tensor}. Helliwell \cite{Helliwell} has studied asymptotically hyperbolic metrics with vanishing obstruction tensor as a generalization of Anderson's boundary regularity studies \cite{Anderson2, Anderson3} to higher (even) dimensions.}

We will present a divergence-free generalization of the Bach tensor to higher dimensions, where we observe the ``splitting'' of the free data into low-order (``Dirichlet'') and high-order (``Neumann') pairs. The fact that there are more high-order data than in the Poincar\'e-Einstein case is curious, and may illustrate a general feature of geometric equations in asymptotically hyperbolic manifolds.

\subsection{Some open questions}
Questions arise from our work, some of which we have considered but are unable to answer. We pose two of the more important ones here, in the hope that others will be able to take them up.

One important question concerns the Lorentzian formulation. What are the free data if the metric is Lorentzian, and how do these data relate to the particle content? Similar questions were studied in physics several years ago. For example, Stelle \cite{Stelle} studied an action for Lorentzian signature metrics in $4$ spacetime dimensions. His action differed from ours in several ways. He had an Einstein-Hilbert term (i.e., a term linear in scalar curvature) as well as terms quadratic in curvatures. He did not include a $|W|^2$ term, citing the Gauss-Bonnet theorem in asymptotically flat spacetime, and he linearized about Minkowski spacetime (thus foregoing a renormalized volume term). He found the following particle content: a graviton, a massive scalar field, and a massive spin-$2$ particle with negative linearized energy.

Another interesting question was raised by Anderson, who asked for a characterization of data at infinity for conformally Poincar\'e-Einstein metrics in arbitrary coordinate gauges \cite[p 463]{Anderson3}. Since we work in a fixed gauge, and study formal expansions only, our work seems not to shed light on this issue.

\subsection{Preview} This paper is organized as follows. In Section 2 we state our conventions and briefly recall the basic theory of asymptotically hyperbolic metrics and Poincar\'e-Einstein metrics. In Section 3 we expand $B_g$ in terms of the tensor $E_g:=\ric_g+(n-1)g$. Section 4 is dedicated to the case of $n=4$. In Section 4.1 we discuss the equation $B^{\perp}=0$ in $n=4$ dimensions (here $\perp$ indicates projection orthogonal to $\partial_x$), while in Section 4.2 we apply the Bianchi identity and obtain a condition on the divergence of the free data $h^{(3)}(0)$. The equation $A=0$ is discussed in Section 4.3. An alternative to fixing the conformal gauge by setting $A=0$ is instead to fix the $Q$-curvature. This is discussed in Section 4.4. The proofs of Theorem \ref{theorem1.1} and Corollary \ref{corollary1.2} then follow quickly from the earlier subsections and are given in Section 4.5. We discuss the $n\ge 5$ case in Section 5.

\subsection{Acknowledgements} AA was supported by a post-doctoral fellowship from the Natural Sciences and Engineering Research Council (NSERC) and an AMS-Simons Travel Grant. The work of EW was supported by an NSERC Discovery Grant RGPIN 203614. Both authors are grateful to the Fields Institute for Research in Mathematical Sciences, where much of this work was carried out, and to the organizers of its 2017 Thematic Programme on Geometric Analysis for a stimulating environment. We are also grateful to the Banff International Research Station for hosting us at its workshop 18W5108 and to C Robin Graham for discussions and helpful comments on an earlier draft.

\section{Preliminaries}\label{section2}
\setcounter{equation}{0}

\subsection{Notation and conventions} As already stated, we use $n=\dim M$ to be the dimension of the bulk manifold $(M,g)$.

We define the rough (or connection) Laplacian to be the trace of the Hessian, i.e., $\Delta_g:=\trace_g \hess=g^{ab}\nabla_a\nabla_b$ for a given Levi-Civita connection $\nabla_g$.

In index notation, we have
\begin{equation}
\label{eq2.1}
\begin{split}
R^a{}_{bcd}{}=&\, \partial_{c}\Gamma_{bd}^a-\partial_{d}\Gamma_{bc}^a + \Gamma_{ce}^a\Gamma_{bd}^e -\Gamma_{de}^a\Gamma_{cd}^e\ , \\
W_{abcd} =&\, R_{abcd} - \frac{1}{(n-2)}\left( g_{ac}R_{bd} -g_{ad}R_{bc} -g_{bc}R_{ad} +g_{bd}R_{ac} \right ) \\
&\, +\frac{1}{(n-1)(n-1)}\left ( g_{ac}g_{bd}-g_{ad}g_{bc}\right )\\
=&\, R_{abcd} - \frac{1}{(n-2)}\left( g_{ac}E_{bd} -g_{ad}E_{bc} -g_{bc}E_{ad} +g_{bd}E_{ac} \right ) \\
&\, +\frac{A}{(n-1)(n-2)}\left ( g_{ac}g_{bd}-g_{ad}g_{bc}\right ) +\frac{n}{(n-2)}\left (  g_{ac}g_{bd}-g_{ad}g_{bc}\right )\ ,
\end{split}
\end{equation}
where $R_{abcd}:=g_{ae}R^e{}_{bcd}$ and we define
\begin{equation}
\label{eq2.2}
\begin{split}
E_{ab}:=&\, R_{ab}+(n-1) g_{ab}\ ,\\
A:=&\, g^{ab}E_{ab}=R+n(n-1)\ .
\end{split}
\end{equation}
We note that $E$ is not the tracefree Einstein tensor (except of course when $A=0$). We also define the Schouten tensor
\begin{equation}
\label{eq2.3}
P_{ab}:= \frac{1}{(n-2)}\left ( R_{ab}-\frac{1}{2(n-1)}Rg_{ab}\right ) \ ,
\end{equation}
and the tracefree Einstein tensor
\begin{equation}
\label{eq2.4}
Z_{ab}:=R_{ab}-\frac{1}{n}R g_{ab}\ .
\end{equation}

Finally, in keeping with standard usage, for a function $f$ depending on a defining function $x$ for conformal infinity, we write $f\in {\mathcal O}(x^p)$ if there are constants $C>0$ and $\epsilon>0$ such that $|f(x)| < Cx^p$ for all $x<\epsilon$. Clearly, if $f\in {\mathcal O}(x^p)$ for some $p>q$, then $f\in {\mathcal O}(x^q)$ as well.

\subsection{Asymptotically hyperbolic metrics} Let $\bar M$ be a compact manifold-with-boundary with interior $M$. A metric $g$ on $M$ is called \emph{conformally compactifiable} if there is a $C^\infty$ metric $\bar{g}$ on $\bar M$ and a positive function $\rho:M\to (0,\infty)$, such that
\begin{equation}
\label{eq2.5}
g = \rho^{-2} \bar{g}
\end{equation}
on $M$, and such that $\rho$ extends smoothly to $\bar M$ with $\rho=0$ and $d\rho\neq 0$ pointwise on $\partial M$.

We refer to $\partial \bar{M}$ as the boundary-at-infinity of $M$. It is sometimes denoted by $\partial_{\infty}{M}$. The conformal equivalence class $[h]$ of $h:=\bar{g}|_{\partial \bar{M}}$ is called the \emph{conformal boundary} of $(M, g)$. We call $\rho$ a defining function for the conformal boundary. We can always arrange that $|d\rho|^2_{\bar g}(\partial \bar{M}) = 1$. If $\bar{g}$ is $C^1$, we can solve the eikonal differential equation $|dx|^2_{\bar{g}} = 1$ in a collar neighbourhood of $\partial \bar{M}$, subject to the boundary condition $x=0$ on $\partial M$. Then $x$ is called a \emph{special defining function} and $(M, g)$ is called \emph{conformally compactifiable and asymptotically hyperbolic}, or simply \emph{asymptotically hyperbolic}. On a neighbourhood of conformal infinity, the metric can then be written in the form of equation \eqref{eq1.3}; equivalently, $dx^2 + h_x$ is a metric in Gaussian normal coordinate form, and $g$ is then said to be in Graham-Lee normal form. By analyzing the formula for the conformal transformation of the curvature, one then sees that the sectional curvatures of an asymptotically hyperbolic metric approach $-1$ as $x\to 0$.

There is some freedom to choose $x$, corresponding to the freedom to choose a conformal representative $h_0$ in $[h]$. We will choose a representative $h_0$ below, so that $x$ will be determined, but the freedom to vary these choices remains. For greater detail, please see \cite{FG2, DGH}.

\subsection{Poincar\'e-Einstein metrics} These are asymptotically hyperbolic Einstein metrics. They obey the negative Einstein equation
\begin{equation}
\label{eq2.6}
E_g:=\ric_g +(n-1)g=0
\end{equation}
on the bulk $n$-dimensional manifold $(M,g)$.

We briefly review the Fefferman-Graham expansion for these metrics. If we insert \eqref{eq1.3} into \eqref{eq2.6}, we obtain
\begin{equation}
\label{eq2.7}
\begin{split}
E_{00}=&\, -\frac12 \tr h_x''+\frac{1}{2x}\tr h_x'+\frac14 \left \vert h_x'\right \vert_{h_x}^2\ , \\
E^{\diamond}=&\, \frac12\left [ \divergence_{h_x} h_x' -d\tr h_x' \right ]\ , \\
E^{\perp}_{h_x}=&\, -\frac12 h_x''+\frac{(n-2)}{2x}h'_x +\frac{1}{2x} h_x \left ( \tr h_x' \right ) +\frac12 h_x'\circ h_x' -\frac14 h_x'\tr h_x'+\ric_{h_x}\ ,
\end{split}
\end{equation}
where $E^{\perp}$ is the tensor on the level sets $x=const$ obtained by orthogonal projection of $E$ onto the tangent spaces of these sets, $E_{00}=E(\partial_x,\partial_x)$, and $E^{\diamond}$ is the covector field on the levels sets of $x$ defined by $E^{\diamond}(\partial_{y^i})=E(\partial_x,\partial_{y^i})$. We denote by $A\circ B$ the contraction whose component form is $(A\circ B)_{ij}:=A_{ik}h^{kl}B_{lk}$.

If one computes the order-$l$ derivative of the above expression for $E^{\perp}$ with respect to $x$, the result is
\begin{equation}
\label{eq2.8}
xh_x^{(l+2)} +(l-n+2)h_x^{(l+1)}-h_x \tr h_x^{(l+1)} = F(h_x,\dots,h_x^{(l)})\ ,\ l=0,1,2,\dots
\end{equation}
where here and in subsequent sections $F$ represents an unspecified function depending only on the listed arguments (and which may change in each subsequent occurrence).
Setting $x=0$ in this equation allows one to compute by iteration the $x$-derivatives of order $1,\dots,n-2$ of $h_x$ at $x=0$ in terms of $h_{(0)}$. When $l=n-2$ the coefficient of $\tf h_x^{(n-1)}$ will vanish. If the tracefree part of $F$ does not vanish under these circumstances, then there is an obstruction to the existence of the Mclaurin expansion of $h_x$ about $x=0$. The nonvanishing terms define the \emph{ambient obstruction tensor} which is of much interest in conformal geometry. The obstruction is avoided by adding logarithmic terms so that we no longer have a Mclaurin expansion for $h_x$, but instead have a polyhomogeneous expansion. In any case, $\tf h_x^{(n-1)}$ is free data and can be freely chosen. Once it has been chosen, the iteration can be restarted and continued to all orders, either as a Mclaurin expansion or a polyhomogeneous expansion, as appropriate. The coefficients of the higher order terms in the expansion will in general depend on both $h_{(0)}$ and $\tf h^{(n-1)}(0)$, but are otherwise completely determined. Two important results easily derived from this iteration procedure are that (i) all the odd derivatives $h_x^{(2l+1)}$ vanish at $x=0$ for $2l+1<n-1$, and (ii) when $n$ is even, $\tr h_x^{(n-1)}$ vanishes at $x=0$.

Because of the second Bianchi identity, one usually focuses attention on $E^{\perp}_{h_x}$ alone, but the vanishing of $E^{\diamond}_{h_x}$ imposes conditions on the divergence of $h_x$ which govern the divergence of certain data. Let $n$ be even. Differentiating $E^{\diamond}_{h_x}$ with respect to $x$ $(n-2)$-times using \eqref{eq2.7}, we obtain that $\divergence_{h_x}h_x^{(n-1)}-d\tr h_x^{(n-1)}$, evaluated at $x=0$, is given by a sum of terms each of which has a factor of the form $h_x^{(2l+1)}\big\vert_{x=0}$, for some $l$ such that $2l+1<n-1$. But in the last paragraph we noted that each odd derivative must vanish. Then $\divergence_{h_x}h_x^{(n-1)}-d\tr h_x^{(n-1)}$ vanishes at $x=0$, and since $\tr h_x^{(n-1)}$ itself vanishes at $x=0$, we conclude that for even $n$ then $\divergence_{h_x}\tf h_x^{(n-1)}\big\vert_{x=0}=0$. In the AdS/CFT correspondence, this allows for the interpretation of $\tf h_x^{(n-1)}\big\vert_{x=0}$ as the vacuum expectation value of the CFT stress-energy tensor \cite{HS, dHSS}. The vanishing of $\tr h_x^{(n-1)}$ means that there is no conformal anomaly (which would break the conformal invariance of the CFT), while the vanishing of $\divergence_{h_x}h_x^{(n-1)}\big\vert_{x=0}$ implies that the appropriate Ward identity is also anomaly-free.

For odd $n$, this analysis determines $\divergence_{h_x}\tf h_x^{(n-1)}\big\vert_{x=0}$ in terms of lower derivatives of $h_x$ at $x=0$, but it need not vanish. Again, for greater detail, please see \cite{FG2, DGH}.

\section{The Bach tensor}
\setcounter{equation}{0}

\subsection{Bach tensor in terms of $E$ and $W$.}
In this section, we record the main formulas used to expand the Bach tensor in a series. We begin by writing
\begin{equation}
\label{eq3.2}
g=\frac{1}{x^2}{\tilde g}\ , \ {\tilde g}=dx^2 \oplus h_x\ .
\end{equation}
We will use ${\tilde \nabla}$ to denote the Levi-Civita connection compatible with ${\tilde g}$. By ${\tilde \nabla}_a E_{bc}$, we mean $\left ( \nabla_{\partial_a} E\right ) (\partial_b, \partial_c)$, and ${\tilde \nabla}^b E_{ab}:={\tilde g}^{bc}{\tilde \nabla}_b E_{ac}$, and of course our coordinates are $x^a\in \{ x^0,y^i\}$, $i\in \{ 1,\dots,n-1\}$; in particular, $x^0\equiv x$.

The formulas are straightforward to derive, but the derivations are often tedious and lengthy calculations, so we include only the main intermediate steps in the derivation. To begin, the Bach tensor can be expanded in terms of $W$, $E$, and $A$.

\begin{lemma}\label{lemma3.1}
\begin{equation}
\label{eq3.1}
\begin{split}
B_{ac}=&\, \frac{1}{(n-2)}\left \{ \Delta E_{ac} - \frac{(n-2)}{2(n-1)}\nabla_a\nabla_c A -\frac{1}{2(n-1)}g_{ac} \Delta A +2W_{dabc}E^{bd}\right .\\
&\, \left . \quad -\frac{n}{(n-2)}\left [ E_a{}^bE_{bc}-\frac{A}{(n-1)}E_{ac}\right ] +\frac{1}{(n-2)} \left [ |E|^2 -\frac{A^2}{(n-1)}\right ] g_{ac}\right . \\
&\, \left . \quad + nE_{ac}-g_{ac}A\right \}\ .
\end{split}
\end{equation}
Then
\begin{equation}
\label{eq3.3}
\begin{split}
\Delta E_{ac} \equiv &\, \Delta_g E_{ac}\\
= &\, x^2 \Delta_{\tilde g} E_{ac} +x\left [ 6 {\tilde \nabla}_0 E_{ac} +2\delta^0_a {\tilde \nabla}^b E_{bc}+2\delta_c^0 {\tilde \nabla}^bE_{ab}-2{\tilde \nabla}_a E_{0c} -2{\tilde \nabla}_c E_{0a} -n{\tilde \nabla}_0 E_{ac}\right ] \\
&\, -2(n-2) E_{ac} +2{\tilde g}_{ac} E_{00} -n\delta_a^0 E_{0c} -n\delta_c^0 E_{0a} +2\delta_a^0\delta_b^0 \left ( E_{00}+h^{ij}E_{ij}\right )\ .
\end{split}
\end{equation}
\end{lemma}

\begin{proof} These results are by direct and simple, if tedious, computation. To obtain \eqref{eq3.1}, simply plug \eqref{eq2.6} into \eqref{eq1.1} and compute using the second Bianchi identity. To obtain \eqref{eq3.3}, note that the connection coefficients ${\tilde \Gamma}^a_{bc}$ of ${\tilde g}_{ab}$ are related to those of $g_{ab}$ (denoted $\Gamma^a_{bc}$) by
\begin{equation}
\label{eq3.4}
\Gamma^a_{bc}={\tilde \Gamma}^a_{bc}-\frac{1}{x}\left ( \delta^0_b\delta^a_c +\delta^0_c\delta^a_b -\delta^a_0{\tilde g}_{bc} \right )\ .
\end{equation}
The usual expansion for a connection in terms of its coefficients yields
\begin{equation}
\label{eq3.5}
\nabla_a E_{bc} ={\tilde \nabla}_a E_{bc} +\frac{1}{x} \left ( 2\delta^0_a E_{bc} +\delta^0_b E_{ac} +\delta^0_c E_{ab} -{\tilde g}_{ab} E_{0c} -{\tilde g}_{ac}E_{0b} \right )\ .
\end{equation}
Now differentiate once more by applying $\nabla_a$ to \eqref{eq3.5} and use that $\Delta_g E_{ac}= g^{bd}\left ( \nabla_b\nabla_d E_{ac}\right )$. This is lengthy but simple and we omit the details.
\end{proof}

It will be useful to expand equation \eqref{eq3.1} componentwise. The non-vanishing Christoffel symbols of the Levi-Civita connection of ${\tilde g}_{ij}$ in the coordinates $x^a\in \{x^0=x,y^i\}$ are
\begin{equation}
\label{eq3.6}
\begin{split}
{\tilde \Gamma}^0_{ij} = &\, -\frac12 h_{ij}' =: K_{ij}\ ,\\
{\tilde \Gamma}^i_{0j} = &\, {\tilde \Gamma}^i_{j0} = \frac12 h^{ik}h_{jk}'=-h^{ik}K_{jk}=:K^i{}_j\ ,\\
{\tilde \Gamma}^i_{jk} = &\, \Xi^i_{jk}\ ,
\end{split}
\end{equation}
where the $\Xi^i_{jk}$ are the Christoffel symbols of the Levi-Civita connection $D=D_x$ compatible with $h_x$ on each constant-$x$ slice. Then we easily compute that
\begin{equation}
\label{eq3.7}
\begin{split}
{\tilde \nabla}_0 E_{00} =&\, E_{00}'\ , \\
{\tilde \nabla}_i E_{00} =&\, D_i E_{00} +2K_i{}^kE_{0k}\ ,\\
{\tilde \nabla}_0 E_{0i} =&\, E_{0i}' +K_i{}^kE_{0k}\ ,\\
{\tilde \nabla}_i E_{0j} =&\, D_i E_{0j} +K_i{}^k E_{jk}-K_{ij}E_{00}\ ,\\
{\tilde \nabla}_0 E_{ij} =&\, E_{ij}' + K_i{}^kE_{jk} +K_j{}^kE_{ik}\ ,\\
{\tilde \nabla}_i E_{jk} =&\, D_i E_{jk} -K_{ij}E_{0k}-K_{ik}E_{0j}\ .
\end{split}
\end{equation}
Differentiating these expressions once more and summing, one obtains the expressions
\begin{equation}
\label{eq3.8}
\begin{split}
\left ( \Delta_g E\right )_{00} = &\, x^2 \left \{ E_{00}'' +\Delta_h E_{00} -HE_{00}' -2|K|_h^2 E_{00} \right . \\
&\,\qquad \left . +2D^i\left ( K_i{}^j E_{0j}\right ) +2K^{ij}D_i E_{0j} +2K^i{}_j K^{jk}E_{ik} \right \} \\
&\, -x \left \{ (n-6)E_{00}' -4D^iE_{0i} -4K^{ij}E_{ij} +4HE_{00}\right \} -4(n-2) E_{00} +2h^{ij}E_{ij}\ ,\\
\left ( \Delta_g E\right )_{0i} = &\, x^2 \left \{ E_{0i}'' +\Delta_hE_{0i} -HE_{0i}' +2K_i{}^j E_{0j}' + (K_i{}^j)'E_{0j} -2K_i{}^jK_j{}^kE_{0k} -|K|_h^2 E_{0i} \right .\\
&\, \qquad \left . -HK_i{}^jE_{0j} +2K^{jk}D_j E_{ik} +(D_j K^{jk})E_{ik} -2K_i{}^jD_j E_{00}-(D^j E_{ij}) E_{00} \right \} \\
&\, -x \left \{ (n-6) E_{0i}' +nK_i{}^j E_{0j} -2D^j E_{ij} +2HE_{0i} +2D_i E_{00} \right \} -(3n-4) E_{0i}\ , \\
\left ( \Delta_g E\right )_{ij} = &\, x^2 \left \{ E_{ij}'' +\Delta_h E_{ij} -HE_{ab}' +2K_i{}^kE_{jk}' +2K_j{}^k E_{ik}' -HK_i{}^kE_{jk} -HK_j{}^k E_{ik} \right .\\
&\, \left . \qquad +(K_i{}^k)'E_{jk} +(K_j{}^k)'E_{ik} +2K_i{}^kK_j{}^lE_{kl} -2K_{ik}D^k E_{0j} -2K_{jk}D^k E_{0i} \right . \\
&\, \qquad \left . -(D^kK_{ik})E_{0j}-(D^kK_{jk})E_{0i}+2K_i{}^kK_{jk} E_{00} \right \} \\
&\, -x \left \{ (n-6) E_{ij}' +(n-8) \left ( K_i{}^kE_{jk} +K_j{}^kE_{ik}\right ) -2 \left ( D_iE_{0j}+D_jE_{0i}\right ) -4K_{ij} E_{00} \right \} \\
&\, -2(n-2) E_{ij} +2h_{ij} E_{00}\ .
\end{split}
\end{equation}
In the above, indices are raised with $h^{-1}$, denoted as usual by $h^{ij}$, and $H:=\trace_h K =h^{ij}K_{ij}$ denotes the mean curvature of level sets of $x$. We need also that
\begin{equation}
\label{eq3.9}
\begin{split}
(\hess A)_{00}=&\, A''+\frac{1}{x}A' \ , \\
(\hess A)_{0i}=&\, D_i (A') + K_i{}^k D_k A +\frac{1}{x}D_i A \ , \\
(\hess A)_{ij}=&\, D_iD_j A -K_{ij}A' -\frac{1}{x}h_{ij} A' \ ,\\
\implies \Delta_g A =&\, x^2 \left [ A'' -HA' + \Delta_h A \right ] -(n-2)x A'\ ,
\end{split}
\end{equation}
where \eqref{eq2.7} yields
\begin{equation}
\label{eq3.10}
\begin{split}
A=&\, -x^2\tr h_x'' +(n-1)x \tr h_x' +\frac34 x^2 \left \vert h_x'\right \vert^2_{h_x} -\frac{x^2}{4} \left ( \tr h_x'\right )^2 +x^2 \scal_{h_x}\\
=&\, x^2 \left ( 2H' -|K|_{h_x}^2-H^2 +\scal_{h_x} \right ) -2(n-1)xH \ .
\end{split}
\end{equation}

Putting this all together, we have
\begin{equation}
\begin{split}
\label{eq3.11}
B_{00} = &\, \frac{x^2}{(n-2)} \left \{ \frac12 {E_{00}}'' -\frac12 \tr ({E^{\perp}}'') -\frac{(2n-3)}{2(n-1)}HE_{00}' +\frac{1}{2(n-1)}H\tr ( {E^{\perp}}' )\right . \\
&\, \left . -2K^{ij}E_{ij}'-(K^{ij})'E_{ij} +\Delta_h E_{00}-\frac{1}{2(n-1)}\Delta_h {\tilde A} +2D_i (K^{ij} E_{0j}) +2K^{ij}D_iE_{0j}\right . \\
&\, \left . +2K^{ij}K_i{}^kE_{jk} +\frac{1}{(n-1)}HK^{ij}E_{ij} -2|K|^2_h E_{00} -E_{00}^2 -|E_{0i}|_h^2 \right . \\
&\, \left . +\frac{1}{(n-1)}E_{00}\tr E^{\perp}
+ \frac{1}{(n-2)} \left [ | E^{\perp} |_h^2 -\frac{1}{(n-1)} \left ( \tr E^{\perp} \right )^2 \right ] \right \}\\
&\, +\frac{x}{(n-2)} \left \{ -(n-4) E_{00}' -2\tr ( {E^{\perp}}') +4D^jE_{0j} -\frac{(6n-7)}{(n-1)}HE_{00}\right . \\
&\, \left . +\frac{1}{(n-1)} H \tr E^{\perp} \right \} \\
&\, -3E_{00} +\frac{2x^4}{(n-2)} W_{0i0j}h^{ik}h^{jl}E_{kl}\ ,
\end{split}
\end{equation}
where we write ${\tilde A}:=\trace_{\tilde g} E \in{\mathcal O}(1/x)$. Continuing, we have
\begin{equation}
\label{eq3.12}
\begin{split}
B_{0i}= &\, \frac{x^2}{(n-2)} \left \{ E_{0i}'' +2K_i{}^jE_{0j}'-HE_{0i}' -\frac{(n-2)}{2(n-1)}D_i \left [ E_{00}' +\tr ({E^{\perp}}')\right ] +\Delta_h E_{0i} \right .\\
&\, \left . +D^k\left ( K_k{}^j E_{ij}-K_{ik}E_{00} \right ) +K^{jk}D_j E_{ik} -K_i{}^j D_j E_{00} -\frac{(n-2)}{2(n-1)} \left [ 2D_i (K^{jk}E_{jk}) \right . \right .\\
&\, \left . \left . +K_i{}^j D_j E_{00} + K_i{}^j D_j (\tr E^{\perp} ) \right ] +K_{ij}' h^{jk}E_{0k} -HK_i{}^jE_{0j} -|K|_h^2 E_{0i}\right . \\
&\, \left . -\frac{n}{(n-2)} \left [ E_{00}E_{0i} +E_{0j}h^{jk}E_{ki} \right ] +\frac{n}{(n-1)(n-2)} \left ( E_{00}+\tr E^{\perp} \right ) E_{0i}\right \}\\
&\, +\frac{x}{(n-2)}\left \{ -(n-6)E_{0i}' -2D_i E_{00} +2D^jE_{ij} -\frac{3(n-2)}{2(n-1)} D_i \left ( E_{00} +\tr E^{\perp} \right )\right . \\
&\, \left . -nK_i{}^jE_{0j} -4HE_{0i} \right \}\\
&\, -2E_{0i} +\frac{2x^4}{(n-2)}\left ( W_{0jik}h^{jl}h^{kp}E_{lp}-W_{0j0i}h^{jk}E_{0k}\right ) \ ,
\end{split}
\end{equation}
and finally
\begin{equation}
\label{eq3.13}
\begin{split}
B_{ij}= &\, \frac{x^2}{(n-2)} \left \{ E_{ij}'' -\frac{1}{2(n-1)} \left [ E_{00}'' +\tr ({E^{\perp}}'')\right ]h_{ij}-HE_{ij}' +2K_i{}^kE_{jk}' +2K_j{}^k E_{ik}' \right .\\
&\, \left . +\frac{1}{2(n-1)} \left [ (n-2) K_{ij} +Hh_{ij} \right ]E_{00}' +\frac{1}{2(n-1)} \left [ (n-2)K_{ij}+Hh_{ij} \right ] \tr ({E^{\perp}}')\right . \\
&\, \left . -\frac{2}{(n-1)}K^{kl}E_{kl}'h_{ij} -\frac{n}{(n-2)} \left [ E_{ik}h^{kl}E_{jl} -\frac{1}{n} \left ( E_{00}^2 +| E^{\perp}|_h^2 \right ) h_{ij} \right ] \right . \\
&\, \left . +\frac{n}{(n-1)(n-2)} \left [ E_{00} +\tr E^{\perp} \right ] \left [ E_{ij} -\frac{1}{n}E_{00} h_{ij} -\frac{1}{n} (\tr E^{\perp} ) h_{ij}\right ] \right \}\\
&\, +\frac{x}{(n-2)} \left \{ -(n-6)E_{ij}' -(n-4)\left ( K_i{}^kE_{jk} +K_j{}^kE_{ik} \right ) +4K_{ij}E_{00} -2HE_{ij} \right .\\
&\, \left . +\frac{(n-4)}{(n-1)} \left [ E_{00}' h_{ij} +\tr ({E^{\perp}}')h_{ij} \right ] +\frac{2(n-4)}{(n-1)}K^{kl}E_{kl} h_{ij}\right .\\
&\, \left . +\frac{1}{(n-1)} \left [ (n-2)K_{ij}+Hh_{ij}\right ] \left [ E_{00}+\tr E^{\perp} \right ] \right \}\\
&\, -\frac{(n-4)}{(n-2)} \left [ E_{ij} -\frac{1}{(n-1)} (\tr E^{\perp})h_{ij}\right ] +\frac{3}{(n-1)}E_{00}h_{ij} \\
&\, +\frac{x^4}{(n-2)}\left [ W_{ikjl}h^{kp}h^{lq}E_{pq}+2W_{ikj0}h^{kl}E_{0l}+W_{i0j0}E_{00} \right ] \ .
\end{split}
\end{equation}

Despite their lengths, the above expressions have a simple structure, owing at least in part to the quasilinearity of the Bach tensor as a function of the metric. For example, the tensor $E$ of an asymptotically hyperbolic metric is ${\mathcal O}(x)$ in all dimensions, while its normal form components can diverge as ${\mathcal O}(1/x)$. For the Bach tensor we can now read off from the above expressions the comparable result.

\begin{lemma}\label{lemma3.2}
The normal-form components of the Bach tensor of an asymptotically hyperbolic $n$-manifold admit a $C^0$ extension to conformal infinity, and vanish at conformal infinity when $n=4$. We have
\begin{equation}
\label{eq3.14}
|B|_g\in \begin{cases} {\mathcal O}(x^2),& n>4 \\ {\mathcal O}(x^3),& n=4 \end{cases}\ .\
\end{equation}
\end{lemma}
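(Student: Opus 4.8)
The plan is to treat both $B$ and $\hat B$ as curvature expressions in the conformally rescaled metric $g=x^{-2}\tilde g$ with $\tilde g=dx^2\oplus h_x$ smooth up to $\{x=0\}$, and to bound the normal-form components by tracking powers of $x$ against $\tilde g$-quantities that stay bounded. First I would record the leading orders forced by $|E|_g\in\mathcal O(x)$ together with \eqref{eq2.7}: since $g^{ab}=x^2\tilde g^{ab}$, one has $E_{00},E_{ij}\in\mathcal O(1/x)$ while $E_{0i}=E^{\diamond}_i\in\mathcal O(1)$ (the $\diamond$-component carries no explicit $1/x$). Writing the simple poles as $E_{00}\sim\rho_{00}/x$ and $E_{ij}\sim\rho_{ij}/x$, \eqref{eq2.7} gives $\rho_{00}=\tfrac12\trace_{h_0}h'(0)$ and $\rho_{ij}=\tfrac{n-2}{2}h'_{ij}(0)+\tfrac12 h_{ij}(0)\trace_{h_0}h'(0)$, so that $\trace_{h_0}\rho^{\perp}=(2n-3)\rho_{00}$. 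I would also note $A\in\mathcal O(x)$ and $|E|_g^2\in\mathcal O(x^2)$ from \eqref{eq3.10}, and $W_{abcd}[g]=x^{-2}W_{abcd}[\tilde g]\in\mathcal O(1/x^2)$ from conformal invariance of $W^a{}_{bcd}$.

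The $C^0$ extension of $B_{00},B_{0i},B_{ij}$ for all $n$ I would obtain by substituting these orders into \eqref{eq3.11}--\eqref{eq3.13}. The prefactors $x^2/(n-2)$ and $x/(n-2)$ cut the a priori singularity of each component down to at worst a simple pole, and the residue of that pole vanishes. For $B_{00}$ the residue is $\tfrac{1}{n-2}\big[(n-3)\rho_{00}+\trace_{h_0}\rho^{\perp}\big]-3\rho_{00}$, which is zero precisely because $\trace_{h_0}\rho^{\perp}=(2n-3)\rho_{00}$; for $B_{ij}$ the parts proportional to $\rho_{ij}$ cancel through $\tfrac{2}{n-2}+\tfrac{n-6}{n-2}-\tfrac{n-4}{n-2}=0$, and the parts proportional to $h_{ij}(0)$ cancel through the identity $-2(n-3)(n-1)+(n-4)(2n-3)+3(n-2)=0$ after using $\rho_{00}+\trace_{h_0}\rho^{\perp}=2(n-1)\rho_{00}$; while $B_{0i}$ has no pole to begin with, being built from the $\mathcal O(1)$ quantity $E_{0i}$. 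This yields the $C^0$ extension claimed in the first sentence.

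For the vanishing at conformal infinity when $n=4$ I would bypass the finite parts of \eqref{eq3.11}--\eqref{eq3.13} and instead invoke the conformal invariance of the four-dimensional Bach tensor: under $\hat g=\Omega^2\tilde g$ one has $B_{ab}[\hat g]=\Omega^{-2}B_{ab}[\tilde g]$, so with $\Omega=1/x$ and $\hat g=g$, $B_{ab}[g]=x^2 B_{ab}[\tilde g]$. As $\tilde g$ is boundary-regular, $B_{ab}[\tilde g]$ is bounded up to $\{x=0\}$, hence $B_{ab}[g]\in\mathcal O(x^2)$ and in particular vanishes at conformal infinity. For $n>4$ I would check directly from \eqref{eq1.9} that $X$ has $C^0$ components: the three groups $AE_{ab}$, $E_{ac}E_b{}^c=x^2E_{ac}\tilde g^{cd}E_{bd}$, and $\big[|E|_g^2-\tfrac{n+2}{4(n-1)}A^2\big]g_{ab}$ carry orders $\mathcal O(x)\cdot\mathcal O(1/x)$, $x^2\cdot\mathcal O(1/x^2)$, and $\mathcal O(x^2)\cdot\mathcal O(1/x^2)$, each $\mathcal O(1)$; hence $\hat B=B-X$ also extends continuously.

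The norm estimates then follow from the elementary identity, valid for any symmetric $(0,2)$-tensor $T$ in normal form, $|T|_g^2=x^4\big(T_{00}^2+2h^{ij}T_{0i}T_{0j}+h^{ik}h^{jl}T_{ij}T_{kl}\big)$. When $n>4$ the components of $\hat B$ are $\mathcal O(1)$, giving $|\hat B|_g^2\in\mathcal O(x^4)$, i.e.\ $|\hat B|_g\in\mathcal O(x^2)$; when $n=4$, where $\hat B=B$ has components in $\mathcal O(x^2)$, one gets $|\hat B|_g\in\mathcal O(x^4)\subset\mathcal O(x^3)$. I expect the main obstacle to be the residue cancellation of the second paragraph: the individual terms in \eqref{eq3.11}--\eqref{eq3.13} are genuinely unbounded as $x\to0$ when $h'(0)\neq0$, so the $C^0$ claim is not a termwise estimate but hinges on the precise residues $\rho_{ab}$ and the algebraic relations among them (the relations being, for $n=4$, exactly those encoded in the conformal invariance invoked above).
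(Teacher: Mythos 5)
Your proof is correct, and for the general-$n$ $C^0$ statement it follows essentially the same route as the paper: substitute the orders forced by \eqref{eq2.7} into \eqref{eq3.11}--\eqref{eq3.13} and check that the apparent simple poles cancel (your residue identities, e.g.\ $\trace_{h_0}\rho^{\perp}=(2n-3)\rho_{00}$ and $-2(n-3)(n-1)+(n-4)(2n-3)+3(n-2)=0$, do check out, and your handling of the Weyl terms and of $X=B-\hat B$ matches the paper's). Where you genuinely diverge is the $n=4$ case: the paper pushes the explicit computation one step further to exhibit the leading term of $B_{ij}$ with its overall factor of $(n-4)$ (equation \eqref{eq3.15}), concluding only that the components are ${\mathcal O}(x)$ and hence $|B|_g\in{\mathcal O}(x^3)$, whereas you invoke the conformal covariance $B_{ab}[\Omega^2\tilde g]=\Omega^{-2}B_{ab}[\tilde g]$ with $\Omega=1/x$ to get $B_{ab}[g]=x^2B_{ab}[\tilde g]\in{\mathcal O}(x^2)$ directly from boundary regularity of $\tilde g$. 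Your route is shorter and actually sharper (it gives $|B|_g\in{\mathcal O}(x^4)$, which of course implies the stated ${\mathcal O}(x^3)$), but it forgoes the explicit leading-order formulas \eqref{eq3.15}--\eqref{eq3.16} that the paper extracts in the course of this proof and reuses downstream (e.g.\ ${\hat B}_{0i}\in{\mathcal O}(x)$ in Proposition \ref{proposition3.3} and the leading terms feeding Section 5), so in the context of the paper the longer computation is not wasted. One small imprecision: your remark that $B_{0i}$ "has no pole to begin with, being built from the ${\mathcal O}(1)$ quantity $E_{0i}$" is too quick, since \eqref{eq3.12} also involves $E_{00}$, $E_{ij}$, $\trace E^{\perp}$ and their spatial derivatives, all of which are ${\mathcal O}(1/x)$; the correct reason is that every such occurrence carries a prefactor of $x$ or $x^2$ sufficient to kill the singularity, so the conclusion $B_{0i}\in{\mathcal O}(1)$ (which is all your argument needs) still stands.
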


\begin{proof}
In deriving equations \eqref{eq3.11}, \eqref{eq3.12}, and \eqref{eq3.13}, we have not expanded the Weyl tensor contribution to the Bach tensor. To establish the lemma, such an expansion is not necessary. To see this, observe first that the conformal properties of the Weyl tensor are such that $W^g_{abcd}=\frac{1}{x^2}W^{\tilde g}_{abcd}$. Now $\left ( W^{\tilde g}\right )^a{}_{bcd}\in {\mathcal O}(1)$, so the components of $W^{\tilde g}$ with respect to a normal-form basis $\{ \partial_0,\partial_i \}$ obey $W^{\tilde g}_{abcd}\in{\mathcal O}(1)$ as well (lowering the index with ${\tilde g}$). Hence $W^g_{abcd}\in {\mathcal O}(x^{-2})$. Further, $h^{ij}\in {\mathcal O}(1)$ and, by equations \eqref{eq2.7}, $E_{ab}\in {\mathcal O}(1/x)$. Thus, any product of the form $W^g*h^{-1}*E^g$ or $W^g*h^{-1}*h^{-1}*E^g$ (with all indices lowered in $W$) has components that are (at worst) ${\mathcal O}(x^{-3})$. But each time such a term appears in equations \eqref{eq3.11}, \eqref{eq3.12}, and \eqref{eq3.13}, it appears with coefficient $x^4$, and hence the components of these terms in the normal-form basis vanish at least as ${\mathcal O}(x)$.

We simply substitute equations \eqref{eq2.7} into equations \eqref{eq3.11}--\eqref{eq3.13}. Together with the fact that the Weyl tensor term $W_{dabc}E^{bd}$ in \eqref{eq3.1}, expressed in a normal form basis, is ${\mathcal O}(x)$, straightforward cancellation of terms now leads directly to
\begin{equation}
\label{eq3.15}
\begin{split}
B_{00}\in &\, {\mathcal O}(x)\ , \\
B_{0i}\in &\, {\mathcal O}(x)\ , \\
B_{ij}=&\, \frac{(n-4)}{(n-2)} \left \{ (n-3) \left [ K_{ij}' -\frac{1}{(n-1)}H'h_{ij}+\frac{2}{(n-1)}|K|_h^2 h_{ij} \right ] \right .\\
&\, \left . \qquad +(n-4) \left [ K_i{}^kK_{jk}-\frac{1}{(n-1)} |K|_h^2 h_{ij} \right ] -Z_{ij}^h +HK_{ij} -\frac{1}{(n-1)}H^2 h_{ij} \right \} \\
&\, +{\mathcal O}(x)\ ,
\end{split}
\end{equation}
where we write $h$ for the boundary metric $h:= h_x\big \vert_{x=0}$ and write $Z^h:=\tf \ric_h$.

Thus when $n=4$, the normal-form components of $B$ are of order $x$. If the components of a $(0,2)$-tensor in the normal-form basis are in ${\mathcal O}(x^p)$, obviously the tensor norm of that tensor is in ${\mathcal O}(x^{p+2})$, so $|B|_g\in {\mathcal O}(x^3)$.
\end{proof}


\subsection{The Bianchi identity}
In the sequel we will have very little need of the expansions of $B^{\diamond}$ (i.e., $B_{0i}$) and $B_{00}$. Instead, we will solve the equation $B^{\perp}=0$ and use the vanishing of the divergence of $B$ to show that the remaining components vanish. As well, $B_{00}$ will vanish simply because the Bach tensor is traceless.

The vanishing of the divergence of the Bach tensor yields
\begin{equation}
\label{eq3.18}
B_{0i}'-\left [ H+\frac{(n-2)}{x}\right ]B_{0i} = -D^jB_{ij}\ .
\end{equation}
Obviously when $B_{ij}=0$ this becomes a homogeneous linear system for $B_{0i}$, admitting the trivial solution.

\begin{proposition}\label{proposition3.3}
Assume that $B_{ij}^{(\alpha)}(0)=0$ for all $0\le\alpha\le\beta$. Then $B_{00}^{(\alpha)}(0)=0$ and $B_{0i}^{(\alpha +1)}(0)=0$ for all $0\le\alpha\le\beta$, where in the case of $\beta\ge n-2$ we must further assume that $B_{0i}^{(n-2)}(0)=0$.
\end{proposition}

\begin{proof}
As mentioned above, if $B_{ij}^{(\alpha)}(0)=0$ for all $0\le\alpha\le\beta$ then $B_{00}^{(\alpha)}(0)=0$  because $B$ is tracefree.

For $B_{0i}$, we have from the proof of Lemma \ref{lemma3.2} that $B_{0i}\in {\mathcal O}(x)$ and $B_{ij}\in {\mathcal O}(1)$. Then expand $B_{0i}=\sum\limits_{\beta=1}^{\infty}b_{i(\beta)}x^{\beta}$, $H= \sum\limits_{\beta=0}^{\infty} h_{(\beta)}x^\beta$, and $D^jB_{ij}=\sum\limits_{\beta=2}^{\infty}c_{i(\beta)}x^{\beta}$. Note that $B_{0i}^{(n-2)}(0)=0$ if and only if $b_{i(n-2)}=0$. Then it is an easy exercise to expand \eqref{eq3.18} and obtain
\begin{equation}
\label{eq3.19}
\begin{split}
&\, \sum\limits_{\beta=1}^{\infty}\left [ \beta -(n-2) \right ] x^{\beta-1} -\sum\limits_{\beta=1}^{\infty}\left ( \sum_{\alpha=1}^{\beta}b_{i(\alpha)}h_{(\beta-\alpha)}\right )x^{\beta} -\sum\limits_{\beta=0}^{\infty}c_{i(\beta)}x^{\beta} =0 \\
\implies &\, \sum\limits_{\beta=0}^{\infty}\left [ \beta -(n-3) \right ] x^{\beta} -\sum\limits_{\beta=1}^{\infty}\left ( \sum_{\alpha=1}^{\beta}b_{i(\alpha)}h_{(\beta-\alpha)}\right )x^{\beta} -\sum\limits_{\beta=0}^{\infty}c_{i(\beta)}x^{\beta} =0\ .
\end{split}
\end{equation}
Equating coefficients of powers of $x$, we have
\begin{equation}
\label{eq3.20}
\begin{split}
b_{i(1)}=&\, -\frac{1}{(n-3)}c_{i(0)}\ , \\
\left [ \beta -(n-3)\right ] b_{i(\beta +1)}=&\, c_{i(\beta)}+\sum\limits_{\alpha}^{\beta} b_{i(\alpha)}h_{(\beta-\alpha)}\ .
\end{split}
\end{equation}
It follows by induction that $b_{i(\beta +1)}=0$ and so $B_{0i}^{(\beta +1)}(0)=0$. One sees from the left-hand side of \eqref{eq3.20} that the induction pauses when $\beta=n-3$, but then the assumption $B_{0i}^{(n-2)}(0)=0$ fulfils the inductive hypothesis and the induction can be restarted and continued arbitrarily.
\end{proof}


\section{Proof of the main theorem}
\setcounter{equation}{0}

\subsection{The equation for $B_{ij}$} We set $n=4$. Then equation \eqref{eq3.13} becomes
\begin{equation}
\label{eq4.1}
\begin{split}
B_{ij}= &\, \frac12 x^2 \left \{ E_{ij}'' -\frac{1}{6} \left [ E_{00}'' +\tr ({E^{\perp}}'')\right ]h_{ij}-HE_{ij}' +2K_i{}^kE_{jk}' +2K_j{}^k E_{ik}' \right .\\
&\, \left . +\frac{1}{3} \left [  K_{ij} +\frac12 Hh_{ij} \right ]E_{00}' +\frac{1}{3} \left [ K_{ij}+ \frac12 Hh_{ij} \right ] \tr ({E^{\perp}}')\right . \\
&\, \left . -\frac{2}{3}K^{kl}E_{kl}'h_{ij} -2 E_{ik}h^{kl}E_{jl} +\frac{1}{2} \left ( E_{00}^2 +| E^{\perp}|_h^2 \right ) h_{ij} \right . \\
&\, \left . +\frac23 \left [ E_{00} +\tr E^{\perp} \right ] \left [ E_{ij} -\frac{1}{4}E_{00} h_{ij} -\frac{1}{4} (\tr E^{\perp} ) h_{ij}\right ] \right \}\\
&\, +\frac12 x \left \{ 2E_{ij}' +4K_{ij}E_{00} -2HE_{ij} +\frac23 \left [ K_{ij}+\frac12 Hh_{ij}\right ] \left [ E_{00}+\tr E^{\perp} \right ] \right \}\\
&\, +E_{00}h_{ij}+\frac12 x^4\left [ W_{ikjl}h^{kp}h^{lq}E_{pq}+2W_{ikj0}h^{kl}E_{0l}+W_{i0j0}E_{00} \right ] \ .
\end{split}
\end{equation}

In view of Lemma \ref{lemma3.2}, the above expression can be expanded as a power series in $x$. If one substitutes \eqref{eq2.7} into \eqref{eq4.1}, one obtains an expression that is perfectly regular at $x=0$---indeed, with vanishing constant term---despite the fact that the expression for $E$ in \eqref{eq2.7} has some divisions by $x$. In particular, let $\LWT$ denote a sum of \emph{lower weight terms}. These are terms that are regular at $x=0$ and have the form of a (possibly) derivative-dependent coefficient $C(h_x,h_x',\dots,h_x^{(p)})$ multiplying a nonnegative power of $x$, say $x^q$. The \emph{weight} is defined to be the order of the highest $x$-derivative of $h_x$ upon which $C$ depends minus the power of $x$ multiplying the term; i.e., the weight is $p-q$. For example, the weight of the term $-\frac14 x^2 \tf h_{ij}^{(4)}(x)$ is $4-2=2$, while a term such as $x^2\left ( \tr h'\right )\tf h_{ij}'$ would have weight $1-2=-1$. Then we have the following.

\begin{lemma}\label{lemma4.1}
For a metric of the form \eqref{eq1.3} with $h_x=\sum_{i=0}^{4}h_{(i)}x^i+{\mathcal O}(x^5)$ then
\begin{equation}
\label{eq4.2}
B_{ij}(x)=-\frac14 x^2 \tf h_{ij}^{(4)}(x)+\LWT\ .
\end{equation}
\end{lemma}

\begin{proof}
Simply plug \eqref{eq2.7} into \eqref{eq4.1}. While the resulting expression is very lengthly, one can eliminate most terms immediately by observing that the highest weight contributions must arise from the linear terms $-\frac12 x^2 E_{ij}''$, $-\frac{1}{6} \left [ E_{00}'' +\tr ({E^{\perp}}'')\right ]h_{ij}$, $xE_{ij}'$, and $E_{00}h_{ij}$. Expanding these terms using \eqref{eq2.7} yields the result.
\end{proof}

\begin{lemma}\label{lemma4.2}
Let $B_{ij}(x)=0$ and $n=4$. Then for some $h_0$-tracefree tensor $F$ on $\partial_{\infty}M$ and any $s\ge 4$ we may write
\begin{equation}
\label{eq4.3}
\tracefree_{h_0}h_{ij}^{(s)}(0)=F_{ij}(h_0,h'(0),\dots,h^{(s-1)}(0))\ .
\end{equation}
\end{lemma}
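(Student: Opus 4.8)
The plan is to read off the recursion directly from Lemma \ref{lemma4.1} by extracting the coefficient of $x^{s-2}$ in the scalar equation $B_{ij}(x)=0$. Writing $B_{ij}(x)=\sum_{\beta} b_{ij(\beta)} x^\beta$ with $b_{ij(\beta)}=\tfrac{1}{\beta!}B_{ij}^{(\beta)}(0)$, the vanishing of $B_{ij}$ is equivalent to $b_{ij(\beta)}=0$ for all $\beta$, and I will solve the single equation $b_{ij(s-2)}=0$ for $\tracefree_{h_0}h^{(s)}_{ij}(0)$.

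First I would isolate the contribution of the leading term. Since multiplication by $x^2$ merely shifts coefficients, the coefficient of $x^{s-2}$ in $-\frac14 x^2\tf h^{(4)}_{ij}(x)$ equals $-\frac14$ times the coefficient of $x^{s-4}$ in $\tf h^{(4)}_{ij}(x)$, i.e. $-\frac{1}{4(s-4)!}\frac{d^{s-4}}{dx^{s-4}}\big[\tf h^{(4)}_{ij}(x)\big]\big\vert_{x=0}$. Here one must be careful that $\tf=\tracefree_{h_x}$ depends on $x$ through $h_x$: writing $\tf h^{(4)}_{ij}(x)=h^{(4)}_{ij}(x)-\tfrac13\big(h^{kl}(x)h^{(4)}_{kl}(x)\big)h_{ij}(x)$, the only way to produce a derivative of $h$ of order $s$ upon applying $\frac{d^{s-4}}{dx^{s-4}}$ is to let all $s-4$ derivatives act on the explicit factor $h^{(4)}$; doing so reproduces exactly $\tracefree_{h_0}h^{(s)}_{ij}(0)$, while every other distribution of the derivatives yields $x$-derivatives of $h$ of order at most $s-1$. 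Thus the leading term contributes $-\tfrac{1}{4(s-4)!}\tracefree_{h_0}h^{(s)}_{ij}(0)$ plus terms depending only on $h_0,\dots,h^{(s-1)}(0)$.

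Next I would dispose of the $\LWT$ contributions using the weight bound. A lower weight term has the form $C(h_x,\dots,h_x^{(p)})x^q$ with $p-q\le 1$ (weights are integers and the leading term has weight $2$). Its coefficient of $x^{s-2}$ vanishes unless $q\le s-2$, in which case it is the coefficient of $x^{s-2-q}$ in $C$, obtained by applying $\frac{d^{s-2-q}}{dx^{s-2-q}}$ at $x=0$; by Leibniz the highest-order derivative of $h$ this can generate is $h^{(p+(s-2-q))}(0)=h^{(s-2+(p-q))}(0)$, of order at most $s-1$. Hence every $\LWT$ term contributes to $b_{ij(s-2)}$ only through $h_0,h'(0),\dots,h^{(s-1)}(0)$.

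Finally, taking the $h_0$-tracefree part of the equation $b_{ij(s-2)}=0$ annihilates all trace contributions (in particular any dependence on $\trace_{h_0}h^{(s)}(0)$ hidden in the projection), leaving $-\tfrac{1}{4(s-4)!}\tracefree_{h_0}h^{(s)}_{ij}(0)$ equated to an $h_0$-tracefree expression in $h_0,\dots,h^{(s-1)}(0)$. Since the coefficient $-\tfrac{1}{4(s-4)!}$ is nonzero for every $s\ge 4$, this solves for $\tracefree_{h_0}h^{(s)}_{ij}(0)$ and yields \eqref{eq4.3} with $F$ manifestly $h_0$-tracefree. I expect the only delicate point to be precisely the bookkeeping above, namely verifying that neither an $\LWT$ term nor a term arising from the $x$-dependence of $\tracefree_{h_x}$ can contribute an order-$s$ derivative, so that the coefficient of $\tracefree_{h_0}h^{(s)}(0)$ really is the isolated, invertible term claimed.
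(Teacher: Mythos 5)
Your proposal is correct and follows essentially the same route as the paper: isolate the highest-weight term $-\tfrac14 x^2\tf h^{(4)}_{ij}(x)$ from Lemma \ref{lemma4.1}, observe that every $\LWT$ term (weight $\le 1$) can only produce $x$-derivatives of $h$ of order at most $s-1$ at the relevant coefficient, and invert the nonvanishing numerical coefficient of $\tracefree_{h_0}h^{(s)}(0)$. Extracting the Taylor coefficient of $x^{s-2}$ is equivalent to the paper's device of differentiating $r=s-2$ times and setting $x=0$, and your explicit bookkeeping for the $x$-dependence of $\tracefree_{h_x}$ is a point the paper absorbs into ``terms of lower differential order.''
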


\begin{proof}
Equation \eqref{eq4.2} with $B_{ij}(x)=0$ implies that
\begin{equation}
\label{eq4.4}
\frac14 x^2 \tf h_{ij}^{(4)}(x)=\LWT\ .
\end{equation}
If one differentiates the left-hand side $r$-times, with $r\ge 2$, and sets $x=0$, one obtains $\frac14r(r-1)\tracefree_{h_0}h^{(r+2)}(0)$ plus terms of lower differential order.

On the right-hand side, consider a term of weight $w:=p-q$ for $p$ and $q$ as described immediately before Lemma \ref{lemma4.1}. If $r<q$, a factor of $x$ remains after differentiation, so the term vanishes when we set $x=0$. Hence take $r\ge q$. Then the term contributes as $\frac{r!}{(r-q)!}\frac{\partial^{r-q}}{\partial x^{r-q}}\big \vert_{x=0} C(h_x,h_x',\dots,h_x^{(p)})$. Thus, the highest derivative that can arise from this term is $h^{(r-q+p)}(0)$. Now $r-q+p=r+w<r+2$ since $w<2$.

Combining both sides, we have that $\tracefree_{h_0}h^{(r+2)}(0)$ equals a sum of terms that depend on no derivative higher than $h^{(r+1)}(0)$. Now set $s=r+2$.
\end{proof}

This lemma does not determine the trace of $h^{(r)}(0)$ for any order $r$. It does, however, show that one can determine all the coefficients in a formal power series solution of $B_{ij}=0$ in the case of an asymptotically hyperbolic $4$-dimensional bulk manifold in terms of given data $h_0\equiv h(0)$, $h'(0)$, $h''(0)$, and $h'''(0)$ at the conformal boundary, if one is also given as data the traces $\trace_{h_0}h^{(r)}(0)$ for all $r$. There are no obstructions, so it is not necessary to augment the power series with logarithmic terms.

\begin{corollary}\label{corollary4.3}
If $B_{ij}(x)=0$ then there is an $h_0$-tracefree tensor $G$ on $\partial_{\infty}M$ such that
\begin{equation}
\label{eq4.5}
\tracefree_{h_0}h_{ij}^{(s)}(0) =G(h_0,h'(0),h''(0),h'''(0),\trace_{h_0}h^{(4)}(0), \dots,\trace_{h_0}h^{(s-1)}(0)) \ ,\ s\ge 4\ .
\end{equation}
\end{corollary}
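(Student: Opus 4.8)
The plan is to bootstrap from Lemma \ref{lemma4.2}, which already establishes that each $\tracefree_{h_0}h^{(s)}(0)$ is determined by the \emph{full} collection of lower-order data $h_0, h'(0), \dots, h^{(s-1)}(0)$. The content of the corollary is the sharper claim that the dependence on those lower-order tensors enters only through their tracefree parts at orders $2$ and $3$ (namely $h''(0)$ and $h'''(0)$, together with $h_0$ and $h'(0)$) and through the \emph{traces} $\trace_{h_0}h^{(r)}(0)$ at orders $4 \le r \le s-1$. In other words, the tracefree parts $\tracefree_{h_0}h^{(r)}(0)$ for $4 \le r \le s-1$ are not independent inputs: they have themselves already been expressed via the same lemma in terms of still-lower data, and one eliminates them recursively.

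First I would set up the induction on $s \ge 4$. The base case $s=4$ is Lemma \ref{lemma4.2} directly: $\tracefree_{h_0}h^{(4)}(0) = F(h_0, h'(0), h''(0), h'''(0))$, and no trace data of order $\ge 4$ appears, matching \eqref{eq4.5}. For the inductive step, I would take the expression $\tracefree_{h_0}h^{(s)}(0) = F(h_0, h'(0), \dots, h^{(s-1)}(0))$ furnished by Lemma \ref{lemma4.2} and split each argument $h^{(r)}(0)$ for $4 \le r \le s-1$ into its $h_0$-trace part and $h_0$-tracefree part, $h^{(r)}(0) = \tracefree_{h_0}h^{(r)}(0) + \frac{1}{n-1}\bigl(\trace_{h_0}h^{(r)}(0)\bigr)h_0$. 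The trace parts are exactly the allowed data in \eqref{eq4.5}. For each tracefree part $\tracefree_{h_0}h^{(r)}(0)$ with $r \le s-1$, the induction hypothesis applies (since $r \ge 4$ and $r < s$), giving $\tracefree_{h_0}h^{(r)}(0) = G(h_0, h'(0), h''(0), h'''(0), \trace_{h_0}h^{(4)}(0), \dots, \trace_{h_0}h^{(r-1)}(0))$. Substituting these back into $F$, every tracefree argument of order $\ge 4$ is replaced by a function of $h_0, h'(0), h''(0), h'''(0)$ and traces of order at most $r-1 \le s-2 < s-1$. The composite is therefore a function of precisely the arguments listed in \eqref{eq4.5}, and I would define $G$ to be this composite. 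Since $\tracefree_{h_0}$ of any tensor is $h_0$-tracefree, $G$ inherits that property.

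The main point requiring care, rather than a genuine obstacle, is bookkeeping of \emph{which} orders of trace data appear. I must verify that substituting the inductive formulas introduces no trace data of order $\ge s$ (it cannot, since every $\tracefree_{h_0}h^{(r)}(0)$ eliminated has $r \le s-1$, and its $G$-expression involves only traces up to order $r-1 \le s-2$), and that the low-order tracefree inputs $h''(0), h'''(0)$ together with $h_0, h'(0)$ are carried through unchanged as primitive data. One should also confirm that no circularity arises: the induction is strictly on $s$, and each tracefree quantity is resolved only in terms of strictly-lower-order tracefree quantities plus admissible traces, so the recursion terminates at the base level $r=4$ where only $h_0, h'(0), h''(0), h'''(0)$ survive. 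With these checks the corollary follows, and it is worth remarking that $G$ depends on the tracefree second and third derivatives only through $\Phi = \tracefree_{h_0}h''(0)$ and $\Psi = \tracefree_{h_0}h'''(0)$, the free data singled out in Theorem \ref{theorem1.1}.
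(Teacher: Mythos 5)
Your proposal is correct and follows exactly the route the paper takes: the paper's entire proof reads ``Immediate from Lemma \ref{lemma4.2} by induction on $s$,'' and your argument is a careful writing-out of precisely that induction, decomposing each $h^{(r)}(0)$ into trace and tracefree parts and eliminating the tracefree parts of order $\ge 4$ recursively. Nothing to add.
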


\begin{proof}
Immediate from Lemma \ref{lemma4.2} by induction on $s$.
\end{proof}

\subsection{The equation for $B_{0i}$}
We know from Proposition \ref{proposition3.3} that, in the presence of the condition $B_{ij}(x)=0$, then the series expansion $B_{0i}(x)$ is determined by the divergence identity except for the coefficient of the order $x^{n-2}$ term. Here we have $n=4$. Then the only additional information to be learned from solving the $B_{0i}(x)=0$ equation directly is the condition(s) under which $B_{0i}''(0)$ will vanish. The next result shows that the condition $B_{0i}''(0)=0$ imposes a condition on the data $h^{(3)}(0)$ which, in the Poincar\'e-Einstein setting, has an important interpretation in the AdS/CFT correspondence. The same result in the Poincar\'e-Einstein case is essential for the AdS/CFT correspondence, because it allows for the interpretation of $h^{(3)}(0)$ (or, for a $2n$-dimensional bulk, $h^{(2n-1)}(0)$) as the vacuum expectation value of the stress-energy for a conformal field theory defined on $\partial_{\infty}M$.

\begin{proposition}\label{proposition4.4} Let $n=4$ and choose $h_{ij}'(0)=0$. Then $B_{0i}''(0)=0 \Leftrightarrow \divergence_{h_0}\tracefree_{h_0}h^{(3)}(0)=0$.
\end{proposition}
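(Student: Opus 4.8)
The plan is to extract the coefficient of $x^2$ in the series expansion of $B_{0i}(x)$ by working directly from formula \eqref{eq3.12} with $n=4$, imposing the hypothesis $h'(0)=0$, and then reading off the obstruction. I would begin by recalling from the Fefferman-Graham setup (equations \eqref{eq2.7}) that $E_{ab}\in{\mathcal O}(x)$ in normal form, so that in the expansion $B_{0i}(x)=\sum_{\beta\ge 1}b_{i(\beta)}x^\beta$ established in Lemma \ref{lemma3.2} and Proposition \ref{proposition3.3}, the coefficient $b_{i(2)}$ is precisely $\tfrac12 B_{0i}''(0)$. The hypothesis $h'(0)=0$ forces $K_{ij}(0)=-\tfrac12 h_{ij}'(0)=0$, $H(0)=0$, and by the partial-evenness remark following \eqref{eq1.5} it also kills the first-order data; this is the key simplification that makes the second-order coefficient tractable.

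Next I would isolate which terms in \eqref{eq3.12} can contribute at order $x^2$. Because the entire right-hand side carries an explicit factor of $x$ or $x^2$ (apart from the $-2E_{0i}$ term, which is ${\mathcal O}(x)$ and contributes to $b_{i(1)}$, and the Weyl term which, as in the proof of Lemma \ref{lemma3.2}, enters only at higher order), the $x^2$-coefficient is governed by the \emph{leading} behaviour of the bracketed expressions. Concretely, the $\tfrac{x^2}{(n-2)}\{\dots\}$ block contributes its value at $x=0$, while the $\tfrac{x}{(n-2)}\{\dots\}$ block contributes the coefficient of its $x^1$ term. With $K(0)=0$ and $H(0)=0$, all products quadratic in $K$ or $H$, as well as terms with an explicit $K$ or $H$ prefactor, drop out at this order. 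What survives are the linear derivative terms: from the $x^2$-block the pieces $E_{0i}''$ and the Schouten-type combination $-\frac{(n-2)}{2(n-1)}D_i[E_{00}'+\operatorname{tr}(E^{\perp\prime})]$ together with $\Delta_h E_{0i}$, $D^k(K_k{}^j E_{ij})$, and similar first-derivative-of-$K$ terms; from the $x$-block the pieces $-(n-6)E_{0i}'$, $2D^jE_{ij}-2D_iE_{00}$, and the trace-gradient term. I would then substitute the explicit normal-form expressions for $E_{0i}$, $E_{00}$, and $E^\perp$ from \eqref{eq2.7}, differentiate, and set $x=0$.

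The main computational observation driving the result is that $E^\diamond=E_{0i}=\tfrac12[\operatorname{div}_{h_x}h_x'-d\operatorname{tr}h_x']$ from \eqref{eq2.7}. Differentiating this twice and evaluating at $x=0$, using $h'(0)=0$ to discard the connection-variation and the trace-gradient contributions, leaves $E_{0i}$ and its relevant derivatives expressed through $\operatorname{div}_{h_0}h^{(3)}(0)$ and $d\operatorname{tr}_{h_0}h^{(3)}(0)$. I would use that $\tracefree_{h_0}$ and trace commute with $D=D_0$ at $x=0$ when $h'(0)=0$, so that the divergence of the tracefree part of $h^{(3)}(0)$ appears as the sole surviving obstruction, all scalar-gradient ($d\operatorname{tr}$) terms cancelling against the Schouten-gradient terms by the precise coefficient matching in \eqref{eq3.12}. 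Assembling $b_{i(2)}$ yields (a nonzero constant times) $\operatorname{div}_{h_0}\tracefree_{h_0}h^{(3)}(0)$, giving the stated equivalence.

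The hard part will be the bookkeeping of the scalar-gradient terms: several distinct terms in \eqref{eq3.12} contribute multiples of $d\operatorname{tr}_{h_0}h^{(3)}(0)$ and of $D_iE_{00}$, and one must verify that their coefficients cancel exactly so that only the \emph{divergence of the tracefree part} remains, rather than the full divergence $\operatorname{div}_{h_0}h^{(3)}(0)$. I would track these by fixing $n=4$ at the outset to make the numerical coefficients concrete, and cross-check the final cancellation against the general divergence identity \eqref{eq3.18}, which at order $x^1$ independently relates $b_{i(2)}$ to $D^jB_{ij}$; consistency with Proposition \ref{proposition3.3} provides a useful internal check that no surviving term has been overlooked.
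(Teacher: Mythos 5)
Your proposal is correct and follows essentially the same route as the paper: an explicit computation from \eqref{eq3.12} using \eqref{eq2.7}, organizing $B_{0i}$ so that the leading (weight-one) term is $\tfrac12 x^2\,\divergence_{h_x}(\tracefree_{h_x}h_x^{(3)})$ and observing that, after two $x$-derivatives at $x=0$, every remaining term either carries a factor of $K_{ij}(0)=-\tfrac12 h_{ij}'(0)=0$ or a surviving power of $x$. The cancellation bookkeeping you flag as the hard part is exactly what the paper compresses into its displayed formula \eqref{eq4.6}, and your cross-check against the divergence identity \eqref{eq3.18} is a sensible, if optional, consistency test.
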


\begin{proof}
Explicit computation beginning with \eqref{eq3.12} and using \eqref{eq2.7} yields
\begin{equation}
\label{eq4.6}
B_{0i}=\left [ \frac12 x^2 \divergence_{h_x} \left ( \tf h_x^{(3)} \right ) -\frac12 (n-4) x \divergence_{h_x} \left ( \tf h_x'' \right )+ \LWT\right ]_i\ .
\end{equation}
Now set $n=4$, differentiate twice with respect to $x$, and set $x=0$. Upon taking two $x$-derivatives of \eqref{eq3.12}, one can see by inspection (using as well \eqref{eq2.7}) that each term arising from twice differentiating the terms denoted $\LWT$ either contains a factor of $K_{ij}$ or has a coefficient of $x$ or $x^2$. Hence these terms vanish upon setting $x=0$ and then $K_{ij}:=-\frac12 h_{ij}'(0)=0$. Thus we obtain
\begin{equation}
\label{eq4.7}
0=B_{0i}''(0)=D^k \left ( \tf h_{ik}^{(3)}(0) \right )\ .
\end{equation}
\end{proof}

\subsection{The condition $A=0$}
This condition is imposed in Corollary \ref{corollary1.2}. Its role is to choose a unique conformal representative within a conformal class $[g]$ of solutions of $B=0$.

We may compute from \eqref{eq2.7} that
\begin{equation}
\label{eq4.8}
A:= \trace_g E = -x^2 \tr h_x''+(n-1)x\tr h_x' +\frac34 x^2 |h_x'|_{h_x}^2-\frac{x^2}{4}\left ( \tr h_x'\right )^2 +x^2\scal_{h_x}\ .
\end{equation}
It is convenient to write the condition $A(x)=0$ as
\begin{equation}
\label{eq4.9}
0 = -x \tr h_x''+(n-1)\tr h_x' +\frac34 x |h_x'|_{h_x}^2-\frac{x}{4}\left ( \tr h_x'\right )^2 +x\scal_{h_x} \ ,
\end{equation}
from which it follows immediately that
\begin{equation}
\label{eq4.10}
\trace_{h_0} h'(0)=0\ .
\end{equation}
If we further assume that $h'(0)=0$, then we can differentiate \eqref{eq4.9} once and set $x=0$ to obtain
\begin{equation}
\label{eq4.11}
\trace_{h_0} h''(0)=-\frac{1}{(n-2)}\scal_{h_0}\equiv -2\trace_{h_0} P_{h_0}\ .
\end{equation}
If one differentiates \eqref{eq4.9} twice, sets $x=0$, and uses $h'(0)=0$, then one obtains
\begin{equation}
\label{eq4.12}
\trace_{h_0} h'''(0)=0\ .
\end{equation}
In general, if one differentiates \eqref{eq4.9} $r\ge 1$ times with respect to $x$ and sets $x=0$, one obtains
\begin{equation}
\label{eq4.13}
0=(n-1-r)\trace_{h_0}h^{(r+1)}(0)+F_n(h_0,h'(0),\dots,h^{(r)}(0))
\end{equation}
for some function $F_n$ that depends on the dimension $n$. When $r=n-1$, one see from this that $\trace_{h_0}h^{(n)}(0)$ is undetermined, and that there are no solutions unless $F_n(h_0,h'(0),\dots,h^{(n-1)}(0))=0$ as well.

\begin{proposition}\label{proposition4.5}
For $n=4$, $\Psi$ a smooth symmetric $h_0$-tracefree $(0,2)$-tensor such that $\divergence_{h_0}\Psi=0$, and $T_4$ some arbitrary function, choose $\tracefree_{h_0} h'(0)=0$, $\tracefree_{h_0} h''(0)=-2Z_{h_0}$, $\tracefree_{h_0} h'''(0)=\Psi$, and $\trace_{h_0}h^{(4)}(0)=T_4$. If $B_g=0$ and $A_g=0$, then $\trace_{h_0}h^{(k)}(0)$ is uniquely determined for all $k$.
\end{proposition}

\begin{proof} This is obvious from the above expressions \eqref{eq4.10}--\eqref{eq4.13} and Theorem \ref{theorem1.1}, provided equation \eqref{eq4.13} has a solution; i.e., provided $F_4=0$. With the chosen data, we have from \eqref{eq4.10}--\eqref{eq4.12} that
\begin{equation}
\label{eq4.14}
h'(0)=0\ ,\ h''(0)=-2P_{h_0}\ ,\ \tracefree_{h_0}h'''(0)=\Psi\ .
\end{equation}
Then we obtain
\begin{equation}
\label{eq4.15}
F_4=-6 \left \vert P_{h_0}\right \vert_{h_0}^2 -6\left ( \trace_{h_0} P_{h_0})\right )^2 + 6\left [ R_{h_0}^{ij}\left ( P_{h_0}\right )_{ij}-D^iD^j \left ( P_{h_0}\right )_{ij}+\Delta_{h_0} \left ( \trace_{h_0}P_{h_0})\right) \right ] \ .
\end{equation}
The Bianchi identity ensures that $-D^iD^j \left ( P_{h_0}\right )_{ij}+\Delta_{h_0} \left ( \trace_{h_0}P_{h_0})\right) =0$, and it is a simple matter to check that the first three terms on the right of \eqref{eq4.15} sum to zero as well, so $F_4$ vanishes as claimed.
\end{proof}

This is, of course, not an accident. The conditions $h'(0)=0$, $h''(0)=-2P_{h_0}$, $\trace_{h_0}h'''(0)=0$, imply that $g$ is a $4$-dimensional APE (asymptotically Poincar\'e-Einstein) metric. There is no obstruction to power series in $x$ for such metrics when the bulk dimension $n$ is even, meaning that for this data the Einstein equations can be solved to order $n$ inclusive (and indeed to any order in $x$). Therefore, we can always solve the equation $A=0$ to order $n$ inclusive (for $n$ even), given data for an APE metric. Beyond order $n$ the coefficient on the left-hand side of equation \eqref{eq4.13} never vanishes, so no obstruction to a recursive solution arises.

\subsection{The condition $Q=6$}
Rather than fixing $A=0$, we can fix the $Q$-curvature. We recall that the $4$-dimensional $Q$-curvature is
\begin{equation}
\label{eq4.16}
\begin{split}
Q:=&\, \frac16 \left [ -\Delta_g \scal_g + \scal_g^2-3|\ric_g |_g^2\right ]\\
=&\, -\frac16 \Delta_g A_g +\frac16 \left ( A_g-12 \right )^2 -\frac12 \left\vert E_g-3g \right\vert_g^2\\
=&\, -\frac16 \Delta_g A_g -A_g -\frac12 |E_g|_g^2+\frac16 A_g^2 +6\ ,
\end{split}
\end{equation}
so Einstein $4$-metrics have $Q=6$. This motivates us to consider replacing the condition $A=0$ by the condition $Q=6$.

Using \eqref{eq3.10} and the last line of \eqref{eq3.9} (and using \eqref{eq2.7} to observe that the $|E|^2$ term is of lower weight), we may rewrite the condition $Q=6$ as
\begin{equation}
\label{eq4.17}
\frac16 x^4 \tr h_x^{(4)} -\frac16 x^3 \tr h_x^{(3)} +\frac23x^2 \tr h_x'' -2x\tr h_x'=\LWT = \frac23 x^2\scal_{h_0} + {\mathcal O}(x^3) \ .
\end{equation}
Differentiating once and setting $x=0$, we immediately see that $\trace_{h_0}h'(0)=0$, which is the same result as we obtained by setting $A=0$. As before, set the free data $\tracefree_{h_0}h'(0)$ to vanish as well, so that $h'(0)=0$. Then we can differentiate \eqref{eq4.17} twice and set $x=0$ to obtain $\trace_{h_0}h''(0)= -\frac12 \scal_{h_0}$, which is the same condition as arises from setting $A=0$, see \eqref{eq4.11}.

Since $\tracefree_{h_0}h''(0)$ is free data for the equation $B=0$ as well as for the equation $Q=6$, let us now choose $\tracefree_{h_0}h''(0)=Z_{h_0}$, so that $h''(0)=-2P_{h_0}$ where $P_{h_0}$ is the Schouten tensor of $h_0$. That is, we choose data that correspond to Poincar\'e-Einstein metrics to order $x^2$ inclusive. One can now compute the coefficients of the higher-order terms in \eqref{eq4.17}. The coefficient of the $x^3$ term vanishes, hence $\trace_{h_0}h'''(0)=0$. (Again, the tracefree part is free data for $B=0$ as well as for $Q=6$.)

To go to order $x^4$ and beyond, differentiate \eqref{eq4.17} $k\ge 4$ times, setting $k=0$, and using the choices and results listed in the last paragraph, we now find
\begin{equation}
\label{eq4.18}
\frac16 k(k-4)\left ( k^2-3k+6\right ) \trace_{h_0}h^{(k)}(0) = \begin{cases} F(h_0),& k=4,\\ F(h_0,h^{(5)}(0),\dots,h^{(k-1)}(0)),& k\ge 5.\end{cases}
\end{equation}
From the left-hand side of \eqref{eq4.18}, we see that $\trace_{h_0}h^{(4)}(0)$ is not determined by the condition $Q=6$. But given $h_0$ and the above choices $\trace_{h_0}h'(0)=0$ and $\tracefree_{h_0}h''(0) =\tracefree_{h_0}P_{h_0}\equiv Z_{h_0}$, if we also choose values for $\tracefree_{h_0}h^{(3)}(0)$ and $\trace_{h_0}h^{(4)}(0)$ then all higher-order traces are determined by recursive application of \eqref{eq4.18}. Now since the left-hand side of \eqref{eq4.18} vanishes when $k=4$, we observe that we must have $F(h_0)=0$ on the right-hand side. But $F(h_0)$ can be separately computed explicitly. We have done so and find that $F(h_0)=0$. This can also be seen without the explicit calculation, by the following argument. The chosen data and the datum $\trace_{h_0}h^{(3)}(0)=0$ together imply that $g$ is asymptotically Poincar\'e-Einstein (APE). Any APE $4$-metric has $|E|\in{\mathcal O}(x^4)$ \cite{BMW1} and $A\in{\mathcal O}(x^5)$ (\cite{Woolgar}, or simply refer to the preceding subsection). Hence these data alone guarantee that $Q-6\in{\mathcal O}(x^5)$ so the fourth-order Taylor coefficient in the expansion of $Q$ vanishes. But this coefficient is $F(h_0)$ (times a non-zero constant).

Thus we have shown the following.

\begin{proposition}\label{proposition4.6}
For $n=4$, choose $\tracefree_{h_0} h'(0)=0$, $\tracefree_{h_0} h''(0)=-2Z_{h_0}$, and $\tracefree_{h_0} h'''(0)=\Psi$ and $\trace_{h_0}h^{(4)}(0)=T_4$ for some arbitrary function $T_4$. If $B_g=0$ and $Q_g-6=0$, then $\trace_{h_0}h^{(k)}$ is uniquely determined for all $k$.
\end{proposition}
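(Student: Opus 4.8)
\textbf{Proof proposal for Proposition \ref{proposition4.6}.}

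The plan is to mirror the structure of the proof of Proposition \ref{proposition4.5}, which handled the analogous statement for the condition $A=0$, but now working from the expansion \eqref{eq4.17} of the condition $Q-6=0$ rather than from \eqref{eq4.9}. The recursion \eqref{eq4.18} already does the bulk of the work: its left-hand side carries the coefficient $\frac16 k(k-4)(k^2-3k+6)$, which I would first check vanishes only at $k=4$ among positive integers (the factor $k^2-3k+6$ has discriminant $9-24<0$, hence no real roots, and $k=0$ is excluded since we differentiate $k\ge 1$ times). Thus for every $k\ge 5$ the coefficient of $\trace_{h_0}h^{(k)}(0)$ is nonzero, and \eqref{eq4.18} determines $\trace_{h_0}h^{(k)}(0)$ recursively in terms of $h_0$ and the lower-order traces, exactly as claimed. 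The tracefree parts at each order are supplied by Corollary \ref{corollary4.3} applied to the equation $B_{ij}=0$, so the two recursions interleave and close.

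Next I would isolate the single exceptional order $k=4$. Here the left-hand side of \eqref{eq4.18} vanishes identically, so the recursion neither determines $\trace_{h_0}h^{(4)}(0)$ nor is automatically consistent: solvability requires the right-hand side $F(h_0)$ to vanish. This is precisely where $T_4:=\trace_{h_0}h^{(4)}(0)$ enters as free (undetermined) data, matching the statement. The crux of the proof is therefore to verify the integrability condition $F(h_0)=0$, and I expect this to be the main obstacle, just as the vanishing of $F_4$ was the delicate point in Proposition \ref{proposition4.5}.

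I would discharge $F(h_0)=0$ by the conceptual argument already sketched in the text rather than by direct computation. The chosen data $h'(0)=0$, $\tracefree_{h_0}h''(0)=Z_{h_0}$ (so $h''(0)=-2P_{h_0}$), together with $\trace_{h_0}h'''(0)=0$, make $g$ asymptotically Poincar\'e-Einstein. Invoking the APE decay results cited in the excerpt---namely $|E|_g\in{\mathcal O}(x^4)$ from \cite{BMW1} and $A_g\in{\mathcal O}(x^5)$ from \cite{Woolgar}---and feeding these into the middle line of \eqref{eq4.16}, every term on the right of that expression is ${\mathcal O}(x^5)$, so $Q-6\in{\mathcal O}(x^5)$. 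Consequently the fourth-order Taylor coefficient of $Q-6$ in $x$ vanishes; but that coefficient is exactly $F(h_0)$ up to a nonzero constant, giving $F(h_0)=0$. (One should note that this conclusion uses only $\trace_{h_0}h'''(0)=0$, which follows from the vanishing of the $x^3$ coefficient in \eqref{eq4.17}, and not the value of $\tracefree_{h_0}h'''(0)=\Psi$, so the argument is insensitive to the free tensor datum.)

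Assembling these pieces completes the proof. Given the stated data, the $k=4$ order is consistent (by $F(h_0)=0$) with $T_4$ left free, and for each $k\ge 5$ equation \eqref{eq4.18} uniquely determines $\trace_{h_0}h^{(k)}(0)$ from previously determined quantities; combined with Corollary \ref{corollary4.3} supplying the tracefree parts, all $h^{(k)}(0)$ are fixed, so the formal power series is uniquely determined. The only genuinely new content beyond Proposition \ref{proposition4.5} is that the exceptional order shifts from the structure of \eqref{eq4.13} to the explicit quartic factor in \eqref{eq4.18}, and I would take care to confirm that no other integer order produces a vanishing coefficient, which the irreducibility of $k^2-3k+6$ guarantees.
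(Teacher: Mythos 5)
Your proposal is correct and follows essentially the same route as the paper: the lower-order traces come from the $x$, $x^2$, $x^3$ coefficients of \eqref{eq4.17}, the recursion \eqref{eq4.18} determines $\trace_{h_0}h^{(k)}(0)$ for $k\ge 5$ since $k^2-3k+6$ has no real roots, $T_4$ is left free at the exceptional order $k=4$, and the integrability condition $F(h_0)=0$ is discharged by the APE decay argument ($|E|_g\in{\mathcal O}(x^4)$, $A_g\in{\mathcal O}(x^5)$, hence $Q-6\in{\mathcal O}(x^5)$) — which is exactly the ``conceptual'' justification the paper offers alongside its direct computation. No gaps.
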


\subsection{The main theorems for $n=4$.} We now have assembled everything we need to prove the $n=4$ results quoted in the Introduction.

\begin{proof}[Proof of Theorem \ref{theorem1.1}]
Corollary \ref{corollary4.3} allows us to compute iteratively and uniquely the tracefree parts of $h^{(k)}(0)$ for $k\ge 4$ in terms of a boundary metric $h(0)=h_0$, arbitrary data $h'(0)$, $h''(0)$, $h'''(0)$, and the traces $T_i:=\trace_{h_0}h^{(i)}(0)$ for $4\le i <k$. Proposition \ref{proposition4.4} imposes one restriction on the data, namely that $\divergence_{h_0}\tracefree_{h_0} h^{(3)}(0)=0$.
\end{proof}

\begin{proof}[Proof of Corollary \ref{corollary1.2}]
By \cite{FG1, FG2}, given $h_0$ there is a unique (formal series expansion for a) Poincar\'e-Einstein metric in normal form \eqref{eq1.3} with $h(0)=h_0$, such that $h'(0)=0$, $h''(0)=-2P_{h_0}$, $h'''(0)=\Psi$, and $\trace_{h_0}h^{(4)}(0)=(3!) \left \vert P_{h_0}\right \vert_{h_0}^2$ as well. But every Poincar\'e-Einstein metric is Poincar\'e-Bach, so this formal series represents a Poincar\'e-Bach metric.

Now Proposition \ref{proposition4.5} and Corollary \ref{corollary4.3} allow us to compute iteratively and uniquely both the trace and tracefree parts of $h^{(k)}(0)$ for $k\ge 4$ in terms of a boundary metric $h(0)=h_0$ and arbitrary data $\tracefree_{h_0} h'(0)$, $\tracefree_{h_0} h''(0)$, $\tracefree_{h_0}h'''(0)$, and $T_4:=\trace_{h_0}h^{(4)}(0)$, yielding a unique formal power series for $h_x$ in \eqref{eq1.1}. Choose the data so that $\tracefree_{h_0} h'(0)=0$, $\tracefree_{h_0} h''(0)=-2Z_{h_0}$, $\tracefree_{h_0}h^{(3)}(0)=\Psi$, and $\trace_{h_0}h^{(4)}(0)=(3!) \left \vert P_{h_0}\right \vert_{h_0}^2$. In particular, this series will have $h'(0)=0$, $h''(0)=-2P_{h_0}$, and $h'''(0)=\Psi$.

Thus the free data for the series that solves $B=0$ agrees with the corresponding coefficients of a unique Poincar\'e-Einstein metric. Since the data uniquely determine the full series, and since there exists a formal Poincar\'e-Einstein metric with these data, the formal solution of $B=0$ determined by these data must be Poincar\'e-Einstein.
\end{proof}

\begin{proof}[Proof of Corollary \ref{corollary1.3}]
The proof is the same except that it relies Proposition \ref{proposition4.6} rather than Proposition \ref{proposition4.5}.
\end{proof}

\section{Higher dimensions}
\setcounter{equation}{0}

The Bach tensor as defined by \eqref{eq1.1} is not conformally invariant for $n\ge 5$ (see \cite[equation 4.16]{Bergman}). More importantly for present purposes, the divergence of the tensor defined by \eqref{eq1.1} is not identically zero when $n\ge 5$. To apply the Fefferman-Graham procedure in its usual form, we need a divergence-free generalization.

On a closed manifold, the Euler-Lagrange equation of the action
\begin{equation}
\label{eq5.1}
\begin{split}
S=&\, \int_M \left [ |E|^2-\frac{nA^2}{4(n-1)}+(n-2)A-2(n-1)(n-2)\right ] dV\\
=&\, \int_M \left [ |\ric|^2 -\frac{n\scal^2}{4(n-1)}-\frac12 (n-2)(n-4)\scal -\frac14 (n-1)(n-2)^2(n-4)\right ] dV
\end{split}
\end{equation}
is
\begin{equation}
\label{eq5.2}
0={\hat B}_{ab}:= B_{ab}-\frac{(n-4)}{2(n-2)^2}\left \{ \frac{n}{(n-1)}AE_{ab} -2E_{ac}E_b{}^c +\left [ |E|_g^2-\frac{(n+2)}{4(n-1)}A^2\right ] g_{ab} \right \}\ .
\end{equation}
Since ${\hat B}$ is obtained from an action principle, it is divergence-free (though not tracefree, unless $\left \vert E_g\right \vert_g^2=\frac{1}{4(n-1)} A_g^2$). This can also be checked by explicit calculation.

While the generalization ${\hat B}$ is not unique amongst higher dimensional generalizations of the Bach tensor, in addition to being divergence-free (and thus yielding in direct fashion to the Fefferman-Graham technique), it has the same principal part on a fixed background (in particular, standard hyperbolic $n$-space) as $B$. Amongst actions that are quadratic in $W$, $E$, and $A$, these conditions fix the action \eqref{eq5.1} up to addition of the integral of $c|W|^2$, which would add terms of the form $W_{ijkl}E^{jl}$ to ${\hat B}$. Terms of this form modify our series expansions only at order $x^n$ and beyond, and will not affect our conclusions. In this sense, our results are in fact general.

We present this example because it illustrates that the free data split into low order and high order pairs. The former pair consists of $h(0)$ and $h'(0)$ (for simplicity, we will set $h'(0)=0$), while the latter pair consists of $h^{(n-2)}(0)$ and $h^{(n-1)}(0)$.

\begin{theorem}\label{theorem5.1}
Let $(M,g)$ be asymptotically hyperbolic and either even-dimensional, or odd-dimensional with conformal infinity whose Fefferman-Graham obstruction tensor \cite{FG1} (see also \cite[section 3.1]{DGH}) vanishes. Let $g=\frac{1}{x^2}\left ( dx^2\oplus h_x\right )$ be Poincar\'e-Bach with $-(n-2)\trace_{h_0}h''(0)=\scal_{h_0}\neq 0$ and $h'(0):=h_x'\big \vert_{x=0}=0$. Let $\Phi$ and $\Psi$ be tracefree symmetric $(0,2)$-tensors on conformal infinity such that $\Psi$ is $h_0$-divergenceless. Then for each such $\Phi$ and $\Psi$ there is a unique formal power series solution of the equations ${\hat B}_g=0$ such that $\tracefree_{h_0}h^{(n-2)}(0)=\Phi$, $\tracefree_{h_0}h^{(n-1)}(0)=\Psi$.
\end{theorem}

The condition $h'(0)=0$ is imposed only for tractability and focus. Without it, some of our expressions become quite complicated without compensating gains in insight. The condition on $\scal_{h_0}$ is related to the part of the ${\hat B}=0$ equation that fixes the conformal gauge. This equation is merely quasi-linear, and the Frobenius-type technique used in Fefferman-Graham type analyses can fail. It happens not to fail when this condition is met. The restriction on the obstruction tensor ensures the expansion will be only in powers of $x$; no logarithmic terms will be required. There are no new, further obstructions to formal power series solutions beyond the obstruction in odd bulk dimension already known from the Poincar\'e-Einstein case \cite{FG1, FG2} (at least when $\scal_{h_0}\neq 0$).

\begin{proof}[Sketch of proof] The divergence-free condition for ${\hat B}$ allows us to repeat the analysis of Section 3.2 straightforwardly, and leads to the conclusion that the divergence of $\Phi$ is free data, while the divergence of $\Psi$ is not. However, since ${\hat B}$ is not tracefree, the analysis of ${\hat B}_{00}$ is modified, the effect of which is that the mass aspect is no longer free data.

Now consider the expansion of ${\hat B}^{\perp}$. By inserting equations \eqref{eq2.7} into \eqref{eq3.13} and counting weights, and observing that the difference between $B$ and ${\hat B}$ consists entirely of lower weight terms, we see that the components of ${\hat B}^{\perp}$ are given by
\begin{equation}
\label{eq5.3} {\hat B}_{ij} = -\frac{1}{2(n-2)} x^2 \tf h_x^{(4)} +\frac{(n-4)}{(n-2)}x\tf h_x^{(3)} - \frac{(n-4)(n-3)}{2(n-2)} \tf h_x''+\LWT\ .
\end{equation}
Then, as with the $n=4$ case, we set ${\hat B}_{ij}=0$ on the left of \eqref{eq5.3} and take derivatives with respect to $x$. This yields
\begin{equation}
\label{eq5.4}
(s-n+2)(s-n+1)\tracefree_{h_0}h_{ij}^{(s)}(0)=F_{ij}(h_0,h'(0),\dots,h^{(s-1)}(0))
\end{equation}
for $s\ge 2$, where $F$ denotes a tracefree symmetric $(0,2)$-tensor ($F$ is a generic notation, not meant to denote the same $F$ as elsewhere).

We can then obtain the analogue of Corollary \ref{corollary4.3} for $n\ge 5$, which is tedious to write out but its content is straightforward. It says that the tracefree parts of the coefficients $h^{(k)}(0)$ are either free data or functions of lower order free data. The free data are $h_0$, $h'(0)$, $h^{(n-2)}(0)$, $h^{(n-1)}(0)$, and the traces $\trace_{h_0}h^{(r)}$ for all $r$. For example, for $n=5$ dimensions, we have
\begin{equation}
\label{eq5.5}
\tracefree_{h_0} h^{(s)}(0)=\begin{cases} F(h_0,h'(0)), & s=2\ ,\\
\\
F(h_0,h'(0),\trace_{h_0} h''(0),h^{(3)}(0),h^{(4)}(0)), & s=5\ ,\\ \\
F(h_0,h'(0),\trace_{h_0} h''(0),h^{(3)}(0),h^{(4)}(0),\trace_{h_0} h^{(5)}(0),\\
\qquad \dots, \trace_{h_0} h^{(s-1)}(0)), & s\ge 6\ . \end{cases}
\end{equation}
In any dimension, when $h'(0)=0$ we have $\tracefree_{h_0} h''(0)=-\frac{2}{(n-3)}Z_{h_0}$ where $Z_{h_0}:=\tracefree_{h_0} \ric_{h_0}$.

In any dimension, we must give special consideration to the cases $s=n-1$ and $s=n-2$ (the gaps at $s=3,4$ in our $n=5$ example above). For these cases, the left-hand side of \eqref{eq5.4} vanishes, so the corresponding derivatives cannot be determined. However, it is not clear that the right-hand sides vanish. This is the question of obstructions to formal power series solutions.

To make things more definite, we choose $h_0$ such that $\scal_{h_0}\neq 0$. Since we've chosen $h'(0)=0$, then $\tracefree_{h_0}h''(0) =-\frac{2}{(n-3)}Z_{h_0}$, and we can choose the trace so that $\trace_{h_0}h''(0) =-\frac{1}{(n-2)} \scal_{h_0}$, as required in the statement of the theorem.

Having determined $h''(0)$, we proceed by induction. Say for some $2\le k< n-3$, we have determined $h^{(k)}(0)$. We can then iterate using \eqref{eq5.4} to determine $\tracefree_{h_0}h^{(k+1)}(0)$ and $\trace_{h_0}h^{(k+1)}(0)$. When $s=n-2$, the induction halts, but we are free to choose $\tracefree_{h_0}h^{(n-2)}(0)$. At next order, the divergence-free part of $\tracefree_{h_0}h^{(n-1)}(0)$ is also free data, while the divergence is determined by the lower-order terms already computed. After the free data are fixed, the induction resumes, and all higher order derivatives of $h^{(s)}(0)$, $k\ge n$, are determined iteratively in terms of the free data.

It remains to check that the right-hand side of equation \eqref{eq5.4} vanishes when $s=n-2$ and when $s=n-1$. For $s=n-2$, this follows because up to that order the metric has the same coefficients as a Poincar\'e-Einstein metric, and Poincar\'e-Einstein metrics obey ${\hat B}=0$. The ${\hat B}=0$ metrics can potentially differ from Poincar\'e-Einstein metrics only at next order, where $\Phi$ is a free choice, whereas for Poincar\'e-Einstein metrics it is not.

Finally, we deal with $s=n-1$. In this case, the right-hand side of equation \eqref{eq5.4} must vanish for all $\Phi:=\tracefree_{h_0}h^{(n-2)}(0)$, not just the value given by the corresponding term in a Poincar\'e-Einstein metric. But simple counting shows that the total number of $x$-derivatives (i.e., summed over all occurrences of $x$-derivatives of $h_x$) within any single term on the right-hand side of \eqref{eq5.4} cannot exceed $n-1$, so any $h^{(n-2)}(0)$ appearing in any such term must multiply $h'(0)$ or not multiply any $x$-derivative of $h_x$ at all; e.g., it could possibly multiply $\scal_{h_0}$, say. But since $h'(0)=0$ by assumption, the former possibility is excluded, while the latter possibility is ruled out by parity. (That is, if one expands \eqref{eq3.13} using \eqref{eq2.7} to obtain $B^{\perp}$ as a sum of terms, each composed of $x$-derivatives of $h_x$ multiplying powers of $x$, the sum of the number of $x$-derivatives and the power of $x$ is even.) This is also true of the terms in ${\hat B}^{\perp}$. Therefore, the sum mod $2$ of the number of $x$-derivatives (acting on $h_x$) in each term of \eqref{eq5.4} when $s=n-1$ equals $n-1\mod 2$, ruling out terms of the form $h^{(n-2)}(0)\cdot f(h_0)$ (where by $f(h_0)$ we of course mean any function of $h_0$, its intrinsic connection $D$, etc). Hence the right-hand side $F$ of \eqref{eq5.4} is independent of $\Phi$ and so depends only on $(h_0,h'(0),\trace_{h_0}h''(0))$. But, if $h_0$ yields an unobstructed Poincar\'e-Einstein metric, then $F=F(h_0,0,-\frac{1}{(n-2)} \scal_{h_0})$ must vanish, and does so independently of $\Phi$.
\end{proof}

We remark that the condition in the last line of the proof that $h_0$ should be data for a formal series for a Poincar\'e-Einstein metric (i.e., that the Fefferman-Graham ambient obstruction tensor for $h_0$ vanishes) always holds if the bulk dimension $n$ is even, and holds for odd $n$ if, for example, $h_0$ is conformally Einstein. Also, when $n$ is even, the argument given to rule out obstructions at order $s=n-1$ combined a parity argument with an appeal to the Poincar\'e-Einstein case but this appeal is really just a short-cut. One can use parity alone to complete the argument when the bulk dimension $n$ is even. If $n$ is odd, one can similarly show that there is no order $(n-2)$ obstruction purely by parity considerations, without appeal to the existence of a Poincar\'e-Einstein metric (but of course this would not work at order $(n-1)$ for $n$ odd).

\end{document}